\newtheorem{theorem}{Theorem}[section]
\newtheorem{proposition}[theorem]{Proposition}
\newtheorem{corollary}[theorem]{Corollary}
\newtheorem{lemma}[theorem]{Lemma}
\newcommand{\Z}{\mathbb{Z}}
\newcommand{\F}{\mathbb{F}}
\newcommand{\C}{\mathbb{C}}
\newcommand{\R}{\mathbb{R}}
\newcommand{\N}{\mathbb{N}}
\newcommand{\Q}{\mathbb{Q}}
\newcommand{\Ac}{\mathcal{A}}
\newcommand{\Ec}{\mathcal{E}}
\newcommand{\e}{\epsilon}
\newcommand{\leg}[2]{\mbox{$({#1}\!\mid\!{#2})$}}
\newcommand{\norm}[1]{\lVert#1\rVert}
\newcommand{\abs}[1]{\lvert#1\rvert}
\newcommand{\bigabs}[1]{\big\lvert#1\big\rvert}
\newcommand{\Biggabs}[1]{\Bigg\lvert#1\Bigg\rvert}
\newcommand{\sums}[1]{\sum_{\substack{#1}}}
\newcommand{\stack}[2]{\genfrac{}{}{0pt}{}{#1}{#2}}
\def \bangle{ \atopwithdelims \langle \rangle}
\DeclareMathOperator{\E}{E}
\begin{document}

\title[$L^q$ norms of Fekete and related polynomials]{\boldmath $L^q$ norms of Fekete and related polynomials}

\author{Christian G\"unther}
\address{Department of Mathematics, Paderborn University, Warburger Str.\ 100, 33098 Paderborn, Germany.}
\email[Ch. G\"unther]{chriguen@math.upb.de}

\author{Kai-Uwe Schmidt}
\address{Department of Mathematics, Paderborn University, Warburger Str.\ 100, 33098 Paderborn, Germany.}
\email[K.-U. Schmidt]{kus@math.upb.de}

\thanks{The authors are supported by German Research Foundation (DFG)}

\date{02 February 2016}

% \subjclass[2010]{Primary: 11B83; Secondary: 42A05, 30C10}
% \keywords{Character polynomial, Fekete polynomial, $L^q$ norm, Littlewood polynomial}

\begin{abstract}
A Littlewood polynomial is a polynomial in $\C[z]$ having all of its coefficients in $\{-1,1\}$. There are various old unsolved problems, mostly due to Littlewood and Erd\H{o}s, that ask for Littlewood polynomials that provide a good approximation to a function that is constant on the complex unit circle, and in particular have small $L^q$ norm on the complex unit circle. We consider the Fekete polynomials
\[
f_p(z)=\sum_{j=1}^{p-1}\leg{j}{p}\,z^j,
\]
where $p$ is an odd prime and $\leg{\,\cdot\,}{p}$ is the Legendre symbol (so that $z^{-1}f_p(z)$ is a Littlewood polynomial). We give explicit and recursive formulas for the limit of the ratio of $L^q$ and $L^2$ norm of $f_p(z)$ when $q$ is an even positive integer and $p\to\infty$. To our knowledge, these are the first results that give these limiting values for specific sequences of nontrivial Littlewood polynomials and infinitely many $q$. Similar results are given for polynomials obtained by cyclically permuting the coefficients of Fekete polynomials and for Littlewood polynomials whose coefficients are obtained from additive characters of finite fields. These results vastly generalise earlier results on the $L^4$ norm of these polynomials.
\end{abstract}

\maketitle

\thispagestyle{empty}

%%%%%%%%%%%%%%%%%%%%%%%%%%%%%%%%%%%%%%%%%%%%%%%%%%%%%%%%%%%%%%%%%

\section{Introduction}

For real $\alpha\ge 1$, the $L^\alpha$ norm of a polynomial $f(z)$ in $\C[z]$ on the complex unit circle is
\[
\norm{f}_\alpha=\bigg(\frac{1}{2\pi}\int_0^{2\pi}\;\abs{f(e^{ i\theta})}^\alpha\;d\theta\bigg)^{1/\alpha},
\]
and its supremum norm is $\norm{f}_\infty=\max_{\theta\in[0,2\pi]}\abs{f(e^{i\theta})}$. There are various extremal problems, originally raised by Erd{\H{o}s}, Littlewood, and others, concerning the behaviour of such norms for polynomials with all coefficients in $\{-1,1\}$, which are today called \emph{Littlewood polynomials} (see Littlewood~\cite{Lit1968}, Borwein~\cite{Bor2002}, and Erd{\'e}lyi~\cite{Erd2002} for surveys on selected problems). Roughly speaking, such problems ask for Littlewood polynomials $f(z)$ that provide a good approximation to a function that is constant on the unit circle. Note that this constant is necessarily $\norm{f}_2=\sqrt{1+\deg f}$. 
\par
Several conjectures have been posed that address the question of what is the best approximation in a certain sense. For example, Golay~\cite{Gol1982} conjectured that there exists a constant $c$ such that $\norm{f}_4/\norm{f}_2\ge 1+c$ for every nonconstant Littlewood polynomial~$f$ and Littlewood~\cite{Lit1966} conjectured that there is no such constant. Golay's conjecture implies another famous conjecture due to Erd{\H{o}s}~\cite{Erd1995},~\cite{NewByr1990}, which states that there exists a constant~$c'$ such that $\norm{f}_\infty/\norm{f}_2\ge 1+c'$ for every nonconstant Littlewood polynomial~$f$. All these conjectures are wide open.
\par
Borwein and Lockhart~\cite{BorLoc2001} proved that, if $f_n$ is a random polynomial of degree $n-1$, then
\[
\lim_{n\to\infty}\E\left(\frac{\norm{f_n}_\alpha}{\sqrt{n}}\right)^\alpha=\Gamma(1+\alpha/2)
\]
and $(\norm{f_n}_\alpha/\sqrt{n})^\alpha$ is asymptotically concentrated around its expectation (see also Choi and Erd{\'e}lyi~\cite{ChoErd2014} for more results on $L^\alpha$ norms of random Littlewood polynomials). Littlewood~\cite{Lit1968} (and independently Newman and Byrnes~\cite{NewByr1990} and H{\o}holdt, Jensen, and Justesen~\cite{HohJenJus1985}) determined the $L^4$ norm of the Rudin-Shapiro polynomials~\cite{Sha1951},~\cite{Rud1959}. More generally, a conjecture attributed in~\cite{DocHab2004} to Saffari asserts that, if $q$ is a positive integer and $f_n$ is a Rudin-Shapiro polynomial of degree $n-1$, then
\[
\lim_{n\to\infty}\left(\frac{\norm{f_n}_{2q}}{\sqrt{n}}\right)^{2q}=\frac{2^q}{q+1}.
\]
This conjecture is true for $q\le 27$ by combining results of Doche and Habsieger~\cite{DocHab2004} and Taghavi and Azadi~\cite{TagAza2008}, but the general problem remains~open.
\par
In this paper we consider the following families of polynomials. For an odd prime~$p$, the \emph{Fekete polynomial} of degree $p-1$ is
\[
f_p(z)=\sum_{j=1}^{p-1}\leg{j}{p}\,z^j,
\]
where $\leg{\,\cdot\,}{p}$ is the Legendre symbol. Note that $z^{-1}f_p(z)$ is a Littlewood polynomial, which has the same $L^\alpha$ norm as $f_p(z)$. For a Mersenne number $n=2^k-1$, a \emph{Galois polynomial} of degree $n-1$ is the Littlewood polynomial
\[
g_n(z)=\sum_{j=0}^{n-1}\psi(\theta^j)\,z^j,
\]
where $\theta$ is a primitive element of $\F_{2^k}$ and $\psi$ is a nontrivial additive character of $\F_{2^k}$. Fekete polynomials appear frequently in the context of extremal polynomial problems~\cite{Mon1980}, \cite{HohJen1988}, \cite{JenJenHoh1991}, \cite{ConGraPooSou2000}, \cite{BorChoYaz2001}, \cite{BorCho2002}, \cite{JedKatSch2013a}, \cite{JedKatSch2013b}, \cite{Kat2013} and have been studied extensively now for over a century~\cite{FekPol1912}.
\par
Erd{\'e}lyi~\cite{Erd2012} established the order of growth of the $L^\alpha$ norm of Fekete polynomials. H{\o}holdt and Jensen~\cite{HohJen1988} proved that, for Fekete polynomials~$f_p(z)$,
\begin{align*}
\lim_{p\to\infty}\left(\frac{\norm{f_p}_4}{\sqrt{p}}\right)^4&=\frac{5}{3}.
\intertext{In fact Borwein and Choi~\cite{BorCho2002} established exact expressions for $\norm{f_p}_4$ in terms of the class number of $\Q(\sqrt{-p})$. Jensen, Jensen, and H{\o}holdt~\cite{JenJenHoh1991} proved that, for Galois polynomials $g_n(z)$,}
\lim_{n\to\infty}\left(\frac{\norm{g_n}_4}{\sqrt{n}}\right)^4&=\frac{4}{3}.
\end{align*}
These are in fact special cases of our main results (see Theorems~\ref{thm:norm_fekete} and~\ref{thm:norm_galois}), which provide corresponding limiting values for the $L^{2q}$ norms of Fekete and Galois polynomials for all positive integers $q$. To our knowledge, these are the first results that give these limiting values for specific sequences of nontrivial Littlewood polynomials and infinitely many $q$.
\par
We also consider the \emph{shifted} Fekete polynomials
\[
f^r_p(z)=\sum_{j=0}^{p-1}\leg{j+r}{p}\,z^j,
\]
where $r$ is an integer, which can depend on $p$. It is known~\cite{HohJen1988} that, if $r/p\to R$ as $p\to\infty$, then
\begin{equation}
\lim_{p\to\infty}\left(\frac{\norm{f^r_p}_4}{\sqrt{p}}\right)^4=\frac{7}{6}+\frac{1}{2}(4\abs{R}-1)^2\quad\text{for $\abs{R}\le \frac{1}{2}$}.   \label{eqn:limit_fekete_q2}
\end{equation}
Again, this is a special case of a more general result (see Theorem~\ref{thm:norm_shifted_fekete}). Note that a shifted Fekete polynomial is not necessarily a Littlewood polynomial since one of its first $p$ coefficients is zero. However changing this coefficient to $-1$ or $1$ does not affect the asymptotic behaviour of the $L^\alpha$ norm.

%%%%%%%%%%%%%%%%%%%%%%%%%%%%%%%%%%%%%%%%%%%%%%%%%%%%%%%%%%%%%%%%%%%%%%%%%%%%

\section{Results}

We begin with establishing some notation that is required to state our results. For a positive integer $m$, let $\Pi_m$ be the set of partitions of $\{1,2,\dots,m\}$. For $\pi\in\Pi_m$, we refer to the elements of $\pi$ as \emph{blocks} and we say that $\pi$ is \emph{even} if each block of $\pi$ has even cardinality.
\par
For a positive integer $n$ and real $x$, we define the \emph{generalised Eulerian numbers} to be
\begin{equation}
{n\bangle x}=\sum_{j=0}^{\lfloor x+1\rfloor}(-1)^j{n+1\choose j}(x+1-j)^n.  \label{eqn:def:Eulerian_numbers}
\end{equation}
Note that ${n\bangle x}$ is nonzero only for $x\in(-1,n)$. If $x$ is integral, then ${n\bangle x}$ is an Eulerian number in the usual sense. We refer to the book~\cite{Pet2015} for the combinatorial significance of Eulerian numbers and to~\cite{WanXuXu2010} for a natural interpretation of generalised Eulerian numbers in terms of splines.
\par
The \emph{signed tangent numbers} $T(k)$ are defined by
\begin{equation}
\log\cosh(z)=\sum_{k=1}^\infty\frac{T(k)}{(2k)!}\,z^{2k}\quad\text{for $\abs{z}<\pi/2$}.   \label{eqn:log_cosh}
\end{equation}
They are scaled versions of Bernoulli numbers and $\abs{T(k)}=(-1)^{k+1}T(k)$ are known as the \emph{tangent} or \emph{zag} numbers, which appear in~\cite{Slo} as $\text{\ttfamily A000182}=[1,2,16,272,7936,353792,\dots]$. The numbers $T(k)$ can be recursively determined via
\[
T(k)=1-\sum_{j=1}^{k-1} {2k-1\choose 2j-1}T(j)\quad\text{for $k\ge 1$},
\]
which can be deduced from Lemma~\ref{lem:exp_formula}.
\par
For Fekete polynomials we have the following result.
\begin{theorem}
\label{thm:norm_fekete}
Let $q$ be a positive integer and let $f_p(z)$ be the Fekete polynomial of degree $p-1$. Then
\[
\lim_{p\to\infty}\left(\frac{\norm{f_p}_{2q}}{\sqrt{p}}\right)^{2q}=\sums{\pi\in\Pi_{2q}\\[0.5ex]\text{$\pi$ even}}\;\sums{a_1,\dots,a_\ell\in\Z\\a_1+\cdots+a_\ell=q}\;\prod_{i=1}^\ell\;\frac{T(N_i)}{(2N_i-1)!}{2N_i-1\bangle a_i-1},
\]
where $\pi=\{B_1,\dots,B_\ell\}$ and $N_i=\abs{B_i}/2$ for all $i$.
\end{theorem}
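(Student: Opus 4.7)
The plan is to expand $\norm{f_p}_{2q}^{2q}$ as a $2q$-fold sum, identify the asymptotic profile of $f_p$ via a Gauss-sum expansion, apply the Weil bound to isolate the surviving character-sum configurations, and match the resulting combinatorial expression to the closed form in the theorem. Orthogonality gives
\[
\norm{f_p}_{2q}^{2q}=\sums{j_1,\ldots,j_{2q}\in\{1,\ldots,p-1\}\\j_1+\cdots+j_q=j_{q+1}+\cdots+j_{2q}}\prod_{i=1}^{2q}\leg{j_i}{p}.
\]
I would substitute the Gauss-sum identity $\leg{j}{p}=\tau^{-1}\sum_{a=1}^{p-1}\leg{a}{p}e^{2\pi iaj/p}$, where $\tau$ is the Gauss sum of $\leg{\,\cdot\,}{p}$, and split the unit circle into the $p$ arcs $e^{2\pi i(k+u)/p}$ for $k\in\{0,\ldots,p-1\}$ and $u\in[0,1)$. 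A direct calculation then exhibits the asymptotic profile
\[
\frac{\sqrt{p}}{\tau}\,f_p(e^{2\pi i(k+u)/p})=\sum_{b\in\Z}\leg{b-k}{p}\phi_0(u+b)+o(1),\qquad \phi_0(v):=\frac{e^{2\pi iv}-1}{2\pi iv},
\]
uniformly in $u\in[0,1)$ and $k$.

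Raising this profile to the $2q$-th power and averaging the $k$-sum, the Weil bound gives $\frac{1}{p}\sum_{k}\leg{Q(k)}{p}=O(p^{-1/2})$ unless $Q(k)=\prod_{i=1}^{q}(b_i^+-k)(b_i^--k)$ is a perfect square in $\F_p[x]$, in which case the average tends to $1$. Hence only those tuples $(b_1^+,\ldots,b_q^+,b_1^-,\ldots,b_q^-)\in\Z^{2q}$ whose underlying multiset has all multiplicities even can contribute in the limit. Grouping surviving tuples by the corresponding set partition $\pi=\{B_1,\ldots,B_\ell\}\in\Pi_{2q}$ (two positions lie in the same block iff the corresponding $b$'s are equal) yields a sum indexed by even partitions, and writing $a_i=\abs{B_i\cap\{1,\ldots,q\}}$ the associated integral over $u$ can be evaluated via Poisson summation: since $\phi_0=\widehat{\mathbf{1}_{[0,1]}}$ and $\overline{\phi_0}=\widehat{\mathbf{1}_{[-1,0]}}$, the Fourier coefficients of the periodisation of $\phi_0(v)^{a_i}\overline{\phi_0(v)}^{2N_i-a_i}$ are values at integer arguments of a $(2N_i)$-fold convolution of $\mathbf{1}_{[0,1]}$ and $\mathbf{1}_{[-1,0]}$, i.e., of a shifted Irwin--Hall density; by~(\ref{eqn:def:Eulerian_numbers}) these values are precisely $\frac{1}{(2N_i-1)!}\,{2N_i-1\bangle a_i-1}$, after the change of variables that transforms the Poisson-side constraint $\sum_im_i=0$ into the theorem's constraint $\sum_i a_i=q$.

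The remaining $T(N_i)$ factors arise from the M\"obius inversion on the lattice of even set partitions that corrects the Poisson step, since the natural Poisson sum runs over all $\vec y\in\Z^\ell$ and thus overcounts the surviving configurations (in which the block-values $y_i$ must be distinct) by contributions from coarsenings of~$\pi$. The formal identity $\cosh z=\exp(\log\cosh z)$ executes this inversion at the level of exponential generating functions of even set partitions: expressing $\cosh z=\sum_{N\geq 0}\frac{z^{2N}}{(2N)!}$ as the exponential of $\log\cosh z=\sum_{N\geq 1}\frac{T(N)}{(2N)!}z^{2N}$ inserts exactly one factor $T(N_i)$ per block, yielding the product in the theorem. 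The main obstacles I foresee are, first, making the Weil-bound application quantitatively uniform in $u$ and in the multiset of shifts $(b_i^\pm)$, so that the boundary effects coming from the indicator $\mathbf{1}_{\{1,\ldots,p-1\}}$ and from configurations where $Q$ is nearly but not exactly a perfect square modulo~$p$ really give $o(p^q)$; and second, carrying out the M\"obius / exponential-formula bookkeeping so as to arrive at the clean product over blocks in Theorem~\ref{thm:norm_fekete} rather than an unwieldy sum over sub-partitions.
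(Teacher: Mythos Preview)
Your outline is correct and follows the same three-step skeleton as the paper: Gauss sums plus the Weil bound isolate the ``even'' configurations, the exponential formula $\cosh=\exp(\log\cosh)$ supplies the factors $T(N_i)$, and a combinatorial computation produces the Eulerian numbers. The implementations differ in a useful way. The paper stays entirely discrete: it writes $\norm{f_p}_{2q}^{2q}=p^{-q}\sum_{t\in(\Z/p\Z)^{2q}}L_{f_p}(t)\,h_{p,0}(t)$ (Proposition~\ref{pro:norm_from_L_and_h}), where $L_{f_p}(t)=\frac{1}{p}\sum_m\prod_k\leg{m+t_k}{p}$ and $h_{p,0}$ is a purely combinatorial exponential sum; the Weil bound gives $L_{f_p}(t)=\mathbf{1}[t\text{ even}]+O(p^{-1/2})$ pointwise, and the error is absorbed by the $\ell^1$-estimate $\sum_t|h_{p,0}(t)|\ll p^{2q}(\log p)^{2q-1}$ (Lemma~\ref{lem:error_term}), proved via an $L^1$ bound for exponential sums over a polyhedron together with Bernstein's inequality. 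The Eulerian numbers then drop out of a direct lattice-point count (Lemma~\ref{lem:asymptotic_counting_lemma}). Your continuous-profile route via $\phi_0=\widehat{\mathbf{1}_{[0,1]}}$ and Poisson summation is the Fourier dual of this, and your Irwin--Hall identification is the same lattice-point count seen from the other side. The practical gain of the paper's discrete decomposition is that it dissolves your first ``obstacle'': one never needs the profile approximation to be uniform in $u$ or in the tail of $\sum_b$, because the character factor is controlled pointwise and the combinatorial factor in $\ell^1$. Your M\"obius/exponential-formula step is exactly the paper's Lemma~\ref{lem:even_tuple_in_ex}.
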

\par
The following corollary provides an efficient way to compute the limiting values in Theorem~\ref{thm:norm_fekete}.
\begin{corollary}
\label{cor:norm_fekete}
Set $F(0,0)=1$ and, for $1\le m\le 2k-1$, define the numbers $F(k,m)$ recursively by
\[
F(k,m)=\sum_{j=1}^k\binom{2k-1}{2j-1}\frac{T(j)}{(2j-1)!}\,\sum_i{2j-1\bangle i-1}F(k-j,m-i),
\]
where the inner sum is over all $i$ such that $F(k-j,m-i)$ is defined. Let~$q$~be a positive integer and let $f_p(z)$ be the Fekete polynomial of degree $p-1$. Then
\[
\lim_{p\to\infty}\left(\frac{\norm{f_p}_{2q}}{\sqrt{p}}\right)^{2q}=F(q,q).
\]
\end{corollary}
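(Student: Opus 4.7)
The plan is to introduce the auxiliary quantity
\[
G(k,m)=\sums{\pi\in\Pi_{2k}\\[0.5ex]\text{$\pi$ even}}\;\sums{a_1,\dots,a_\ell\in\Z\\a_1+\cdots+a_\ell=m}\;\prod_{i=1}^\ell\;\frac{T(N_i)}{(2N_i-1)!}{2N_i-1\bangle a_i-1},
\]
where $\pi=\{B_1,\dots,B_\ell\}$ and $N_i=\abs{B_i}/2$, so that Theorem~\ref{thm:norm_fekete} reads
\[
\lim_{p\to\infty}\left(\frac{\norm{f_p}_{2q}}{\sqrt{p}}\right)^{2q}=G(q,q).
\]
The corollary then amounts to proving $F(k,m)=G(k,m)$ on the domain where $F$ is defined, namely $(0,0)$ together with the pairs $(k,m)$ satisfying $k\ge 1$ and $1\le m\le 2k-1$.

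The base case $G(0,0)=1$ is immediate from the unique empty partition and empty product. For the recursion I would split the outer sum according to the block $B_1$ of $\pi$ that contains the element $1$. If $\abs{B_1}=2j$ for some $1\le j\le k$, there are $\binom{2k-1}{2j-1}$ ways to pick the remaining elements of $B_1$ from $\{2,\dots,2k\}$, and this block contributes the weight $\frac{T(j)}{(2j-1)!}{2j-1\bangle a_1-1}$. The other blocks then form an even partition of the remaining $2(k-j)$ elements with associated integers summing to $m-a_1$. Since the weight depends only on block sizes, relabelling the remaining elements as $\{1,\dots,2(k-j)\}$ and renaming $a_1$ as $i$ yields exactly
\[
G(k,m)=\sum_{j=1}^k\binom{2k-1}{2j-1}\,\frac{T(j)}{(2j-1)!}\sum_i{2j-1\bangle i-1}\,G(k-j,m-i),
\]
which matches the recursion defining $F$.

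To match the summation range in the corollary, I need to check that the inner sum is effectively restricted to those $i$ for which $F(k-j,m-i)$ is defined. The generalised Eulerian number ${2j-1\bangle i-1}$ vanishes outside $i\in\{1,\dots,2j-1\}$ by~\eqref{eqn:def:Eulerian_numbers}. Moreover, a direct accounting shows that $G(k',m')$ is nonzero only when $(k',m')=(0,0)$ or $k'\ge 1$ and $1\le m'\le 2k'-1$: for an even partition with $\ell\ge 1$ blocks the constraints $1\le a_i\le 2N_i-1$ force $\ell\le m'\le 2k'-\ell$. Hence the vanishing locus of $G$ matches the domain of $F$, and a straightforward induction on $k$ yields $F(k,m)=G(k,m)$ throughout. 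Specialising to $(k,m)=(q,q)$ and applying Theorem~\ref{thm:norm_fekete} then finishes the proof.

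The substantive content is the standard \emph{peel off the block containing the element $1$} step, so I do not anticipate a serious obstacle; the only mildly delicate point is verifying that the vanishing pattern of $G$ is consistent with the domain on which the recursion for $F$ is permitted to refer to previously defined values, so that no undefined $F(k',m')$ ever enters the recursion.
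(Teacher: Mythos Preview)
Your proposal is correct and is essentially the paper's argument unpacked: the paper encodes the numbers $G(k,m)$ as coefficients of the polynomials $F_{2k}(x)=\sum_{\pi}\prod_i T(N_i)A_{N_i}(x)/(2N_i-1)!$ (with $A_N$ the Eulerian polynomial) and then invokes the recursion in Lemma~\ref{lem:exp_formula} to obtain $F_{2k}(x)=\sum_{j=1}^k\binom{2k-1}{2j-1}\frac{T(j)A_j(x)}{(2j-1)!}F_{2k-2j}(x)$, which upon reading off the coefficient of~$x^m$ is exactly your recursion. Your ``peel off the block containing~$1$'' step is precisely the combinatorial proof of that recursion, and your check that $G(k',m')$ vanishes outside $1\le m'\le 2k'-1$ corresponds to the paper's observation that $F_{2k}(x)$ has degree $2k-1$ with $F_{2k}(0)=0$.
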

\par
For $k\ge 1$, the numbers $(2k-1)!\,F(k,m)$ identified in Corollary~\ref{cor:norm_fekete} define a triangular array of integers, whose first four rows are given by:
\[
\setlength{\arraycolsep}{9pt}
\begin{array}{ccccccccc}
 & & & & 1 & & & &\\
 & & & -2 & 10 & -2 & & &\\
 & & 16 & -184 & 456 & -184 & 16 &\\
 & -272 & 5776 & -30736 & 55504 & -30736 & 5776 & -272 &
% 7936 & -284288 & 2555008 & -8998016 & 13801600 & -8998016 & 2555008 & -284288 & 7936.
\end{array}
\]
The first and last entry in row $k$ equals $T(k)$ and the central entry in row~$k$ divided by $(2k-1)!$ equals the limiting value in Corollary~\ref{cor:norm_fekete} for $k=q$. The first eight of these limiting values are:
\vspace*{1.5ex}
\[
1,\,\frac{5}{3},\,\frac{19}{5},\,\frac{3469}{315},\,\frac{21565}{567},\,\frac{7760593}{51975},\,\frac{12478099}{19305},\,\frac{643983856759}{212837625}.%,\,\frac{32151685807}{2127125}.%,\,\tfrac{29631345749270231}{371231385525}
\]
\vspace*{-0.5ex}
\par
We now turn to Galois polynomials. Let $J_0(z)$ be the zeroth Bessel function of the first kind and define the numbers $C(k)$ via
\begin{equation}
\log(J_0(2\sqrt{z}))=\sum_{k=1}^\infty \frac{(-1)^{k}\,C(k)}{(k!)^2}\,z^k.   \label{eqn:def_carlitz_numbers}
\end{equation}
We call these numbers the \emph{signed Carlitz numbers}. The corresponding unsigned numbers $\abs{C(k)}=(-1)^{k+1}C(k)$ have been extensively studied by Carlitz~\cite{Car1963} and appear in \cite{Slo} as $\text{\ttfamily A002190}=[0,1,1,4,33,456,9460,\dots]$ (which starts at $k=0$ with $C(0)=0$). The numbers $C(k)$ can be recursively determined via
\[
C(k)=1-\sum_{j=1}^{k-1}{k\choose j}{k-1\choose j-1}C(j)\quad\text{for $k\ge 1$},
\]
which again can be deduced from Lemma~\ref{lem:exp_formula}.
\par
For Galois polynomials we have the following result.
\begin{theorem}
\label{thm:norm_galois}
Let $q$ be a positive integer and let $g_n(z)$ be a Galois polynomial of degree $n-1$. Then
\[
\lim_{n\to\infty}\left(\frac{\norm{g_n}_{2q}}{\sqrt{n}}\right)^{2q}\!\!=\sum_{\pi\in\Pi_q}\!{q\choose N_1,\dots,N_\ell}\!\sums{a_1,\dots,a_\ell\in\Z\\a_1+\cdots+a_\ell=q}\;\prod_{i=1}^\ell\;\frac{C(N_i)}{(2N_i-1)!}{2N_i-1\bangle a_i-1},
\]
where $\pi=\{B_1,\dots,B_\ell\}$ and $N_i=\abs{B_i}$ for all $i$.
\end{theorem}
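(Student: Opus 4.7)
The plan is to follow the same high-level strategy as for Theorem \ref{thm:norm_fekete}, with the multiplicative Legendre character replaced by the additive character $\psi$ of $\F_{2^k}$. First, I would expand
\[
\norm{g_n}_{2q}^{2q}=\sum_{\substack{(j_1,\ldots,j_{2q})\in\{0,\ldots,n-1\}^{2q}\\ j_1+\cdots+j_q=j_{q+1}+\cdots+j_{2q}}}\psi\!\left(\sum_{i=1}^{2q}\theta^{j_i}\right),
\]
using that each coefficient $\psi(\theta^j)$ lies in $\{\pm 1\}$ and that $\psi$ is a group homomorphism, so the product of $2q$ character values collapses into a single $\psi$ applied to the sum. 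I would then classify tuples by the set partition $\pi=\{B_1,\ldots,B_\ell\}$ of $\{1,\ldots,2q\}$ determined by equalities among the $j_i$'s. Writing $j_{B_i}\in\{0,\ldots,n-1\}$ for the common value on $B_i$ and $a_i=|B_i\cap\{1,\ldots,q\}|$ for the positive count, a block of size $|B_i|$ contributes $|B_i|\,\theta^{j_{B_i}}$ to the argument of $\psi$ (which vanishes in characteristic~$2$ precisely when $|B_i|$ is even), while the integer constraint becomes $\sum_i(2a_i-|B_i|)\,j_{B_i}=0$.

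The core of the argument is to factor, for each partition, the inner sum into a character piece and an integer-constraint piece. For the character piece, I would perform an inclusion--exclusion on coincidences of the $j_{B_i}$'s and use the single identity $\sum_{x\in\F_{2^k}^*}\psi(x)=-1$; the per-block asymptotic contribution, as a function of block size, turns out to satisfy the moment--cumulant relation encoded by $\log J_0(2\sqrt z)$. Via Lemma~\ref{lem:exp_formula}, this produces the signed Carlitz number $C(N_i)$ for each block of size $2N_i$, and simultaneously shows that partitions containing a block of odd size contribute only lower-order terms. For the integer-constraint piece, the number of tuples $(j_{B_1},\ldots,j_{B_\ell})\in\{0,\ldots,n-1\}^\ell$ satisfying $\sum_i(2a_i-|B_i|)\,j_{B_i}=0$ is asymptotic, after normalisation, to the volume of the corresponding hyperplane slice of $[0,1]^\ell$; this volume evaluates in closed form to a product of terms of the form ${2N_i-1\bangle a_i-1}/(2N_i-1)!$, one per block, while the balance $\sum_i a_i=q$ reflects $|\{1,\ldots,q\}|=q$.

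Finally, I would repackage the sum to reach the stated form. For fixed block sizes $(2N_1,\ldots,2N_\ell)$, an even partition $\pi\in\Pi_{2q}$ together with the data $(a_1,\ldots,a_\ell)$ is equivalent to a partition of $\{1,\ldots,q\}$ into blocks of sizes $N_1,\ldots,N_\ell$ (giving an element of $\Pi_q$) together with an independent assignment of matched subsets of $\{q+1,\ldots,2q\}$; the latter choice produces the multinomial coefficient ${q\choose N_1,\ldots,N_\ell}$ and explains the substitution of $\Pi_{2q}$ by $\Pi_q$. Combining this combinatorial reorganisation with the character and constraint asymptotics gives the formula in the theorem. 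The main technical obstacle is the character-sum step: rigorously isolating the per-block contribution as $C(N_i)$ requires a careful moment analysis of additive characters on $\F_{2^k}^*$, parallel to but distinct from the Fekete case, where the replacement of $\cosh$ by $\log J_0(2\sqrt{\cdot})$ reflects the different moment structure of the two character types.
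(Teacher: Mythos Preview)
Your approach diverges from the paper's in a fundamental way, and the divergence introduces a genuine gap. The paper does \emph{not} classify tuples by equalities among the $j_i$. Instead it applies the discrete Fourier decomposition of Proposition~\ref{pro:norm_from_L_and_h}, so that the arithmetic of $g_n$ is carried by $L_{g_n}(t)$, a normalised product of Gauss sums, while the integer constraint sits in $h_{n,0}(t)$. A bound of Katz on products of Gauss sums shows that $L_{g_n}(t)$ is essentially the indicator of $t$ being an \emph{abelian square} (second half a permutation of the first); together with Lemma~\ref{lem:error_term} this reduces the problem to summing $h_{n,0}(t)$ over abelian squares. The crucial point is that $h_{n,0}$ still involves all $2q$ variables $j_1,\dots,j_{2q}$, and once one fixes a partition $\pi$ in the $t$-variables the inner character sums convert the single equation $j_1+\cdots+j_q=j_{q+1}+\cdots+j_{2q}$ into a system of congruences, one per block. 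That is exactly why the count factors and why each block yields a generalised Eulerian number via Lemma~\ref{lem:asymptotic_counting_lemma}.

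In your outline the mechanism for this factoring is missing. After you collapse each block $B_i$ to a single value $j_{B_i}$, the integer constraint becomes the single linear equation $\sum_i(2a_i-\abs{B_i})\,j_{B_i}=0$ in only $\ell$ unknowns, and the volume of that hyperplane slice of $[0,1]^\ell$ is \emph{not} $\prod_i \frac{1}{(2N_i-1)!}{2N_i-1\bangle a_i-1}$; the latter is a slice of $[0,1]^{2q}$, not $[0,1]^\ell$. Already at $q=2$ this fails: the even partitions of $\{1,2,3,4\}$ with distinct block values contribute $2n^2+O(n)$ to $\norm{g_n}_4^4$, not $\tfrac{4}{3}n^2$, so your claim that partitions with an odd block ``contribute only lower-order terms'' is false---they contribute $-\tfrac{2}{3}n^2+O(n)$. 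In addition, your $a_i=\abs{B_i\cap\{1,\dots,q\}}$ is determined by $\pi$ and lies in $\{0,\dots,\abs{B_i}\}$, whereas the $a_i$ in the theorem are free integers summing to $q$ (they arise in the paper from writing a congruence modulo $n$ as an exact equality $=a_in$); the repackaging you sketch does not convert one into the other. Finally, the elementary identity $\sum_{x\in\F_{2^k}^*}\psi(x)=-1$ is not strong enough to isolate the main term; the paper's route through Gauss sums and Katz's bound is what supplies square-root cancellation off the abelian-square set.
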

\par
We have the following counterpart of Corollary~\ref{cor:norm_fekete} for Galois polynomials.
\begin{corollary}
\label{cor:norm_galois}
Set $G(0,0)=1$ and, for $1\le m\le 2k-1$, define the numbers $G(k,m)$ recursively by
\[
G(k,m)=\sum_{j=1}^k\binom{k}{j}\binom{k-1}{j-1}\frac{C(j)}{(2j-1)!}\,\sum_i{2j-1\bangle i-1}G(k-j,m-i),
\]
where the inner sum is over all $i$ such that $G(k-j,m-i)$ is defined. Let~$q$~be a positive integer and let $g_n(z)$ be a Galois polynomial of degree~$n-1$. Then
\[
\lim_{n\to\infty}\left(\frac{\norm{g_n}_{2q}}{\sqrt{n}}\right)^{2q}=G(q,q).
\]
\end{corollary}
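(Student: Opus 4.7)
The plan is to prove Corollary~\ref{cor:norm_galois} by induction on $k$, after extending the sum in Theorem~\ref{thm:norm_galois} to a two-parameter family. Define
\[
H(k,m)=\sum_{\pi\in\Pi_k}\binom{k}{N_1,\dots,N_\ell}\sums{a_1,\dots,a_\ell\in\Z\\a_1+\cdots+a_\ell=m}\;\prod_{i=1}^\ell\frac{C(N_i)}{(2N_i-1)!}\,{2N_i-1\bangle a_i-1},
\]
with $H(0,0)=1$, and observe that the limit in Theorem~\ref{thm:norm_galois} is precisely $H(q,q)$. The goal is to show that $H(k,m)=G(k,m)$ for all admissible pairs $(k,m)$.

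To derive a recurrence for $H(k,m)$, I would distinguish the block $B$ of $\pi\in\Pi_k$ that contains the element $1$, and set $j=|B|$. The remaining $j-1$ elements of $B$ can be chosen from $\{2,\dots,k\}$ in $\binom{k-1}{j-1}$ ways, and after relabelling the complementary $(k-j)$-set the other blocks form an arbitrary $\pi'\in\Pi_{k-j}$. If $N_1=j$ corresponds to $B$, the multinomial splits as
\[
\binom{k}{N_1,\dots,N_\ell}=\binom{k}{j}\binom{k-j}{N_2,\dots,N_\ell}.
\]
Separating the factor indexed by $B$, writing $a_1=i$, and grouping the remaining $a_2,\dots,a_\ell$ (which then sum to $m-i$) yields
\[
H(k,m)=\sum_{j=1}^k\binom{k}{j}\binom{k-1}{j-1}\frac{C(j)}{(2j-1)!}\sum_i{2j-1\bangle i-1}\,H(k-j,m-i),
\]
where the inner sum automatically restricts to those $i$ for which $H(k-j,m-i)$ is nonzero.

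This is exactly the recurrence defining $G(k,m)$, so induction on $k$ (with base case $H(0,0)=G(0,0)=1$) gives $H(k,m)=G(k,m)$ for all admissible pairs, and specialising to $m=k=q$ proves the corollary. The bookkeeping step worth flagging is the factorisation $\binom{k-1}{j-1}\binom{k}{j,N_2,\dots,N_\ell}=\binom{k-1}{j-1}\binom{k}{j}\binom{k-j}{N_2,\dots,N_\ell}$, which is where the product $\binom{k}{j}\binom{k-1}{j-1}$ in the recurrence for $G$ originates; this plays the role that $\binom{2k-1}{2j-1}$ plays in Corollary~\ref{cor:norm_fekete} when distinguishing the block of an even set partition of $\{1,\dots,2k\}$ that contains the element $1$. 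I expect no analytic obstacle beyond this combinatorial identification.
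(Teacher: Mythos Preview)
Your argument is correct and is essentially the paper's own approach: the paper packages the two-parameter family as coefficients of the polynomials $G_k(x)=\sum_m G(k,m)x^m$ via the Eulerian polynomials $A_N(x)$ and then applies the exponential formula (Lemma~\ref{lem:exp_formula}) to obtain the recurrence, whereas you carry out the same ``distinguish the block containing $1$'' argument directly at the coefficient level. The factorisation $\binom{k}{N_1,\dots,N_\ell}=\binom{k}{j}\binom{k-j}{N_2,\dots,N_\ell}$ together with the $\binom{k-1}{j-1}$ choices for the remaining elements of that block is exactly what produces the $\binom{k}{j}\binom{k-1}{j-1}$ factor, matching the paper's derivation after clearing the factorials in $G_k(x)/k!$.
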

\par
For $k\ge 1$, the numbers $(2k-1)!\, G(k,m)$ identified in Corollary~\ref{cor:norm_galois} also define a triangular array of integers, whose first four rows are given by:
\[
\setlength{\arraycolsep}{9pt}
\begin{array}{ccccccccc}
 & & & & 1 & & & &\\
 & & & -1 & 8 & -1 & & &\\
 & & 4 & -76 & 264 & -76 & 4 &\\
 & -33 & 1248 & -9735 & 22080 & -9735 & 1248 & -33 &\\
% 456 & -32088 & 440448 & -2085096 & 3715440 & -2085096 & 440448 & -32088 & 456
\end{array}
\]
The first and last entry in row $k$ equals $C(k)$ and the central entry in row~$k$ divided by $(2k-1)!$ equals the limiting value in Corollary~\ref{cor:norm_galois} for $k=q$. The first eight of these limiting values are:
\vspace*{1.5ex}
\[
1,\,\frac{4}{3},\,\frac{11}{5},\,\frac{92}{21},\,\frac{15481}{1512},\,\frac{411913}{15120},\,\frac{2482927}{30888},\,\frac{4181926481}{16216200}.%,\,\frac{10431687390203}{11762150400}.
\]
\vspace*{-0.5ex}
\par
In what follows we consider the shifted Fekete polynomials.
\begin{theorem}
\label{thm:norm_shifted_fekete}
Let $q$ be a positive integer and let $f_p^r(z)$ be a shifted Fekete polynomial corresponding to the Fekete polynomial of degree $p-1$. If $r/p\to R$ as $p\to\infty$, then
\[
\lim_{p\to\infty}\left(\frac{\norm{f_p^r}_{2q}}{\sqrt{p}}\right)^{2q}\!=\!\sums{\pi\in\Pi_{2q}\\[0.5ex]\text{$\pi$ even}}\;\sums{a_1,\dots,a_\ell\in\Z\\a_1+\cdots+a_\ell=q}\prod_{i=1}^\ell\;\frac{T(N_i)}{(2N_i-1)!}{2N_i-1\bangle 2R(N_i-P_i)+a_i-1},
\]
where $\pi=\{B_1,\dots,B_\ell\}$, $N_i=\abs{B_i}/2$, and $P_i=\abs{\{x\in B_i:x>q\}}$ for all~$i$.
\end{theorem}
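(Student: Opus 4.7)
The plan is to run the argument used for Theorem~\ref{thm:norm_fekete}, monitoring how the shift $r$ propagates. First, by Parseval,
\[
\norm{f_p^r}_{2q}^{2q}=\sum_{\substack{\xi_1,\ldots,\xi_{2q}\in\{0,\ldots,p-1\}\\\epsilon_1\xi_1+\cdots+\epsilon_{2q}\xi_{2q}=0}}\prod_{i=1}^{2q}\leg{\xi_i+r}{p},
\]
with $\epsilon_i=+1$ for $i\le q$ and $\epsilon_i=-1$ for $i>q$. Grouping terms by the partition $\pi=\{B_1,\ldots,B_\ell\}\in\Pi_{2q}$ that records which $\xi_i$ coincide, the Legendre factor on a block $B$ reduces to $\leg{\,\cdot\,}{p}^{|B|}$, which equals $1$ away from its zero set if $|B|$ is even and is a nontrivial multiplicative character otherwise. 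Weil's theorem then shows that any $\pi$ containing an odd block contributes $o(p^q)$, uniformly in $r$; only even partitions survive in the limit.

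For an even partition the surviving quantity is the asymptotic count of $\ell$-tuples $(y_1,\ldots,y_\ell)\in\{0,\ldots,p-1\}^\ell$ with distinct entries satisfying the linear equation $\sum_k\epsilon_{B_k}y_k=0$, where $\epsilon_{B_k}=\sum_{i\in B_k}\epsilon_i=2(N_k-P_k)$. I then reuse the block-by-block argument developed for Theorem~\ref{thm:norm_fekete}: the internal structure of a block of size $2N_k$ contributes the intrinsic factor $T(N_k)/(2N_k-1)!$ (the coefficient extracted by the $\log\cosh$ expansion~\eqref{eqn:log_cosh}, which also accounts for the $\pm 1$ pattern inside a block), while decomposing the global linear constraint into per-block contributions $a_k$ with $a_1+\cdots+a_\ell=q$ produces the factor ${2N_k-1\bangle a_k-1}$ from the continuous limit of a lattice-point count via the spline representation~\eqref{eqn:def:Eulerian_numbers}.

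To track the effect of $R$ I substitute $m_k=y_k+r$ so that $m_k\in\{r,\ldots,r+p-1\}$ and rescale $x_k=m_k/p$. The relevant region in the continuum limit becomes $[R,1+R]^\ell$ intersected with the hyperplane $\sum_k\epsilon_{B_k}x_k=0$, whose coefficient of $x_k$ is $\epsilon_{B_k}=2(N_k-P_k)$. Translating the variable range by $R$ accordingly shifts the argument of the corresponding generalised Eulerian factor by $R\,\epsilon_{B_k}=2R(N_k-P_k)$; in view of~\eqref{eqn:def:Eulerian_numbers}, this replaces ${2N_k-1\bangle a_k-1}$ by ${2N_k-1\bangle 2R(N_k-P_k)+a_k-1}$ and produces the claimed formula.

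The main obstacle is verifying that the shift enters only through the Eulerian-number argument and not the tangent-number factor, which amounts to checking that the block-internal step in the proof of Theorem~\ref{thm:norm_fekete} depends only on the block size and not on the location of the common block value. Uniform control of the Weil and lattice-point error terms under $r/p\to R$ follows from the standard quantitative forms and poses no additional difficulty.
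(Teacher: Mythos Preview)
Your direct expansion is not the paper's route, and the Weil step as you state it is false. Grouping by the coincidence partition of the $\xi_i$ and claiming that every partition with an odd block contributes $o(p^q)$ already fails at $q=2$: for the all-singleton partition the relevant character sum (taking $r=0$ and working modulo $p$) is
\[
\sum_{y_1,y_2,y_3\bmod p}\leg{y_1}{p}\leg{y_2}{p}\leg{y_3}{p}\leg{y_1+y_2-y_3}{p}=p(p-1),
\]
which is $\Theta(p^q)$. The linear constraint eliminates only one variable, leaving a multivariable character sum whose argument $y_1y_2y_3(y_1+y_2-y_3)$ is far from generic, so there is no square-root cancellation. Consistently, summing your even partitions for $q=2$ gives $2p^2+O(p)$, whereas the correct limit is $5/3$; the deficit of order $p^2$ comes precisely from the odd-block partitions you discarded.

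The paper avoids this obstacle by first passing to the frequency side via the Gauss-sum evaluation $f_p(e_p(k))=i^{(p-1)^2/4}\sqrt p\,\leg{k}{p}$ (Proposition~\ref{pro:norm_from_L_and_h}). This makes the character-theoretic input the \emph{single-variable} sum $L_{f_p}(t)=p^{-1}\sum_m\prod_i\leg{m+t_i}{p}$, to which the classical Weil bound applies cleanly (Lemma~\ref{lem:norm_even_tuples}). The tangent numbers $T(N_i)$ then enter through the inclusion--exclusion identity on even tuples $t$ (Lemma~\ref{lem:even_tuple_in_ex}), and the generalised Eulerian numbers through the $2q$-variable lattice-point count inside $h_{p,r}$ (Lemma~\ref{lem:partial_sums_fekete}), which is where the shift $r$ actually surfaces. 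In your framework each even block collapses to a single variable $y_k$, so there is no $(2N_k{-}1)$-dimensional count available to produce the factor ${2N_k-1\bangle a_k-1}$; your attribution of both $T(N_k)$ and the Eulerian factor to ``internal block structure'' matches neither your own setup nor the paper's argument.
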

\par
Note that, for $R=0$, Theorem~\ref{thm:norm_shifted_fekete} reduces to Theorem~\ref{thm:norm_fekete}. We are not aware of a computationally efficient version of Theorem~\ref{thm:norm_shifted_fekete} in a spirit similar to Corollaries~\ref{cor:norm_fekete} and~\ref{cor:norm_galois}.
\par
It follows from Theorem~\ref{thm:norm_shifted_fekete} that, for each positive integer $q$, there exists a function $\varphi_q:\R\to\R$ such that, if $r/p\to R$, then
\[
\lim_{p\to\infty}\left(\frac{\norm{f_p^r}_{2q}}{\sqrt{p}}\right)^{2q}=\varphi_q(R).
\]
Since the generalised Eulerian numbers ${n\bangle x}$ are continuous piecewise polynomial functions of $x$, the functions $\varphi_q$ are also continuous piecewise polynomial functions. It follows from Theorem~\ref{thm:norm_shifted_fekete} that $\varphi_q(x+1/2)=\varphi_q(x)$ for all $x\in\R$. It can also be shown that $\varphi_q(-x)=\varphi_q(x)$ for all $x\in\R$, so that it is sufficient to know $\varphi_q(x)$ for $x\in[0,1/2)$. We have for example
\begin{align*}
\varphi_2(x)&=\frac{7}{6}+\frac{1}{2}(4x-1)^2\quad\text{for $0\le x\le \frac{1}{2}$},
\intertext{in accordance with~\eqref{eqn:limit_fekete_q2},}
\varphi_3(x)&=\frac{31}{20}+\frac{3}{4}(4x-1)^2(16x^2-8x+3)\quad\text{for $0\le x\le \frac{1}{2}$},\\
\intertext{and}
\varphi_4(x)&=\begin{cases}
\phi(x)     & \text{for $0\le x\le 1/4$}\\[.5ex]
\phi(1/2-x) & \text{for $1/4\le x\le 1/2$},
\end{cases}
\end{align*}
where
\[
\phi(x)=\frac{653}{280}+\frac{1}{72} \, {\left(4 \, x - 1\right)}^{2} {\left(60416 \, x^{4} - 52736 \, x^{3} + 20208 \, x^{2} - 4216 \, x + 625\right)}.
\]
For $q\in\{2,3,4\}$, it is readily verified that the function $\varphi_q$ attains its global minimum at a unique point in $[0,1/2)$, namely at $1/4$. We could not prove that this is true for all $q>1$, but conjecture that this is the case. For convenience, we provide the first eight values of $\varphi_q(1/4)$ (starting with $q=1$):
\vspace*{1.5ex}
\[
1,\,\frac{7}{6},\,\frac{31}{20},\,\frac{653}{280},\,\frac{
71735}{18144},\,\frac{24880549}{3326400},\,\frac{72207143}{4633200},\,
\frac{960901090937}{27243216000}.%,\,\frac{1343039345489}{15682867200}.
\]
\vspace*{-0.5ex}
\par
We shall prove our results for Fekete and Galois polynomials in Sections~\ref{sec:fekete} and~\ref{sec:galois}, respectively.
\par
We note that it is also possible to define shifted Galois polynomials by cyclically permuting the coefficients of a Galois polynomial. However every such polynomial is again a Galois polynomial. It should also be noted that our methods can be used to establish similar results for polynomials obtained by periodically appending or truncating monomials in Fekete or Galois polynomials, as considered in~\cite{JedKatSch2013a} and~\cite{JedKatSch2013b}.

%%%%%%%%%%%%%%%%%%%%%%%%%%%%%%%%%%%%%%%%%%%%%%%%%%%%%%%%%%%%%%%%%%%%%%%%%%%%

\section{Calculation of $L^{2q}$ norms}

We begin with establishing some notation that will be used throughout this paper. For a positive integer $n$, we write $e_n(x)=\exp(2\pi ix/n)$. Let $f(z)=\sum_{j=0}^{n-1}a_jz^j$ be a polynomial of degree $n-1$ in $\C[z]$ and let~$r$ be an integer. Define the \emph{shifted} polynomial
\[
f^r(z)=\sum_{j=0}^{n-1}a_{j+r}z^j,
\]
where we extend the definition of $a_j$ so that $a_{j+n}=a_j$ for all $j\in\Z$. We shall express the $L^{2q}$ norm of this polynomial in a form that will be convenient for us later.
\par
To do so, we associate with $f$ the function $L_f:(\Z/n\Z)^{2q}\to\C$ given~by
\[
L_f(t_1,\dots,t_{2q})=\frac{1}{n^{q+1}}\;\sum_{m\in\Z/n\Z}\;\prod_{k=1}^q\;f(e_n(m+t_k))\overline{f(e_n(m+t_{q+k}))}
\]
and define another function $h_{n,r}:(\Z/n\Z)^{2q}\to\C$ by
\[
h_{n,r}(t_1,\dots,t_{2q})=\sums{0\le j_1,\dots,j_{2q}<n\\j_1+\dots+j_q=j_{q+1}+\dots+j_{2q}}\prod_{k=1}^q\overline{e_n(t_k(j_k+r))}e_n(t_{q+k}(j_{q+k}+r)).
\]
The following proposition will be the starting point to prove our main results.
\begin{proposition}
\label{pro:norm_from_L_and_h}
Let $q$ be a positive integer, let $f(z)$ be a polynomial in~$\C[z]$ of degree $n-1$, and let $r$ be an integer. Then
\[
\norm{f^r}_{2q}^{2q}=\frac{1}{n^{q}}\sum_{t\in(\Z/n\Z)^{2q}}L_f(t)\,h_{n,r}(t).
\]
\end{proposition}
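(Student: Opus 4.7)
The plan is to expand both sides and match them term by term. For the left-hand side I would compute $\norm{f^r}_{2q}^{2q}$ by writing $|f^r(e^{i\theta})|^{2q}$ as a $2q$-fold product, expanding into a sum of monomials in $e^{i\theta}$, and integrating, which isolates the constant Fourier coefficient. This yields
\[
\norm{f^r}_{2q}^{2q}=\sums{0\le j_1,\dots,j_{2q}<n\\j_1+\cdots+j_q=j_{q+1}+\cdots+j_{2q}}\;\prod_{k=1}^q a_{j_k+r}\overline{a_{j_{q+k}+r}},
\]
where the sum is constrained by \emph{exact} equality of the two partial sums of indices.

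For the right-hand side I would first substitute the definition of $f$ into $L_f(t)$ and apply the orthogonality relation $\sum_{m\in\Z/n\Z}e_n(ms)=n\cdot[n\mid s]$ to the inner sum over $m$. This yields
\[
L_f(t)=\frac{1}{n^q}\sums{0\le j_1,\dots,j_{2q}<n\\j_1+\cdots+j_q\equiv j_{q+1}+\cdots+j_{2q}\pmod n}\prod_{k=1}^q a_{j_k}\overline{a_{j_{q+k}}}\,e_n(j_kt_k)\overline{e_n(j_{q+k}t_{q+k})}.
\]
Then I form $L_f(t)\,h_{n,r}(t)$ and sum over $t\in(\Z/n\Z)^{2q}$. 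Each variable $t_k$ appears in exactly one complex exponential, so another application of orthogonality on each $t$-variable collapses the sum over $t$ to the constraints $j_k\equiv j'_k+r\pmod n$ for $k=1,\dots,2q$, where $j$ indexes the terms coming from $L_f$ and $j'$ those from $h_{n,r}$. Using the $n$-periodicity $a_{j+n}=a_j$, each $a_{j_k}$ becomes $a_{j'_k+r}$.

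What remains is to check that everything aligns: the $1/n^q$ prefactor in the statement, the $1/n^q$ arising in $L_f$, and the factor $n^{2q}$ produced by orthogonality over the $2q$ variables $t_1,\dots,t_{2q}$ cancel exactly; the mod-$n$ congruence inherited from $L_f$ is automatic given the exact equality enforced by $h_{n,r}$ together with the congruences $j_k\equiv j'_k+r\pmod n$; and the assignment sending $j'$ to its representative $j$ in $\{0,\dots,n-1\}^{2q}$ is a bijection, so no spurious solutions are introduced. The only real work is disciplined index-tracking; orthogonality of characters on $\Z/n\Z$ and $n$-periodicity of the coefficient sequence $(a_j)$ are the sole analytic inputs, so there is no substantial obstacle.
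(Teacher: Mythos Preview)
Your proposal is correct and uses essentially the same ingredients as the paper: expanding the $L^{2q}$ norm as a constrained coefficient sum, plus orthogonality of characters on $\Z/n\Z$. The only stylistic difference is direction: the paper starts from the left-hand side, substitutes the inverse discrete Fourier transform $a_j=\frac{1}{n}\sum_{s}f(e_n(s))e_n(-sj)$, and then re-indexes via $s_i=m+t_i$ to recognise $L_f$ and $h_{n,r}$; you instead expand $L_f$ in terms of the coefficients $a_j$ and sum over $t$ to collapse onto the same coefficient sum. These are dual computations of equal difficulty, and your bookkeeping of the congruences and powers of $n$ is accurate.
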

\begin{proof}
Write $f(z)=\sum_{j=0}^{n-1}a_jz^j$. From
\[
\norm{f^r}_{2q}^{2q}=\frac{1}{2\pi}\int_0^{2\pi}\Big[f^r(e^{i\theta})\overline{f^r(e^{i\theta})}\Big]^q\,d\theta
\]
we obtain
\[
\norm{f^r}_{2q}^{2q}=\sums{0\le j_1,\dots,j_{2q}<n\\j_1+\cdots+j_q=j_{q+1}+\cdots+j_{2q}}\prod_{k=1}^qa_{j_k+r}\;\overline{a_{j_{q+k}+r}}.
\]
Now it is readily verified that
\[
a_j=\frac{1}{n}\sum_{s\in\Z/n\Z}f(e_n(s))\,e_n(-sj),
\]
giving
\[
\norm{f^r}_{2q}^{2q}=\frac{1}{n^{2q}}\sum_{s_1,\dots,s_{2q}\in\Z/n\Z}\;h_{n,r}(s_1,\dots,s_{2q})\;\prod_{k=1}^q\;f(e_n(s_k))\;\overline{f(e_n(s_{q+k}))}.
\]
Re-index the summation with $s_i=m+t_i$ for all $i$ and then sum over $m\in\Z/n\Z$ to obtain the statement in the proposition.
\end{proof}
\par
We also need the following estimate.
\begin{lemma}
\label{lem:error_term}
There exists a constant $C_q$, depending only on $q$, such that
\[
\sum_{t\in(\Z/n\Z)^{2q}}\abs{h_{n,r}(t)}\le C_q\,n^{2q}(\log n)^{2q-1}
\]
for all $r$.
\end{lemma}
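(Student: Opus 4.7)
The plan is first to reduce to $r=0$, then convert the arithmetic constraint into a Fourier integral, and finally estimate the resulting sum via Dirichlet-kernel/Lebesgue-constant bounds.

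First, observe that the $r$-dependence in $h_{n,r}(t)$ enters only through the prefactor $\prod_{k=1}^q\overline{e_n(t_kr)}e_n(t_{q+k}r)=e_n\big(r\sum_{k=1}^q(t_{q+k}-t_k)\big)$, which has modulus one. Hence $|h_{n,r}(t)|=|h_{n,0}(t)|$, and it suffices to prove the bound for $r=0$.

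Next, I encode the constraint using the identity $\int_0^1 e^{2\pi i\alpha d}\,d\alpha=\mathbb{1}[d=0]$ for integer $d$, valid since $|d|=|\sum_{k\le q}j_k-\sum_{k>q}j_k|\le q(n-1)<\infty$. Interchanging summation and integration yields
\[
h_{n,0}(t)=\int_0^1\prod_{k=1}^q A(\alpha-t_k/n)\prod_{k=q+1}^{2q}\overline{A(\alpha-t_k/n)}\,d\alpha,\qquad A(\beta):=\sum_{j=0}^{n-1}e^{2\pi ij\beta}.
\]
Moving the absolute value inside the integral, the product factorises when summed over $t$:
\[
\sum_{t\in(\Z/n\Z)^{2q}}|h_{n,0}(t)|\le\int_0^1 g(\alpha)^{2q}\,d\alpha,\qquad g(\alpha):=\sum_{s=0}^{n-1}|A(\alpha-s/n)|.
\]

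Now I estimate $g(\alpha)$ using $|A(\beta)|\le\min\{n,\,1/(2\|\beta\|)\}$, where $\|\cdot\|$ denotes distance to the nearest integer. Since the $n$ points $\alpha-s/n$ form an arithmetic progression with spacing $1/n$ on $\R/\Z$, at most two are within distance $1/n$ of an integer (each contributing at most $n$), and the rest have distances at least $1/(2n),3/(2n),5/(2n),\dots$; summing yields $g(\alpha)\le Cn\log n$ for an absolute constant $C$, which is the standard Lebesgue-constant bound. This immediately gives $\sum_t|h_{n,0}(t)|\le(Cn\log n)^{2q}$.

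The main obstacle is saving one factor of $\log n$ to reach the claimed exponent $2q-1$. I would pursue this by splitting the integral according to the proximity of $\alpha$ to the points $k/n$. Since $g$ is $1/n$-periodic, write $\int_0^1 g^{2q}=n\int_0^{1/n}g^{2q}$ and parametrise $\alpha=s/n$ with $s\in[0,1)$. A careful computation using $g(s/n)=|\sin(\pi s)|\sum_{k=0}^{n-1}1/|\sin(\pi(s+k)/n)|$ shows that $g(s/n)\asymp n$ for $s$ within distance $O(1/\log n)$ of $\{0,1\}$, whereas $g(s/n)\asymp n\log n$ elsewhere. Splitting the integral accordingly, the ``bad region'' where $g\asymp n\log n$ has length $1-O(1/\log n)$, so $\int_0^{1/n}g^{2q}\le(1/n)\cdot (Cn\log n)^{2q-1}\cdot(Cn)\cdot(\text{something})$. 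Combined with the identity $\int_0^1|A|\,d\beta=O(\log n)$ (the Lebesgue constant), used to control one of the $2q$ factors in an $L^1$-rather-than-$L^\infty$ fashion, this should give $\int_0^1 g^{2q}\le C_q n^{2q}(\log n)^{2q-1}$. The cleanest implementation is likely to apply Hölder's inequality as $\int g^{2q}\le\|g\|_\infty^{2q-1}\int g$, where only the $\int g=n\int|A|\le Cn\log n$ factor contributes a power of $\log n$; this decouples the $L^\infty$ contribution (which itself scales as $(n\log n)^{2q-1}$ in a sharpened form using the localisation of the maximum) from the integral. The technical heart of the argument is verifying that this Hölder decomposition indeed yields the advertised exponent rather than $(\log n)^{2q}$.
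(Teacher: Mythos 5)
Your reduction to $r=0$ and the integral representation of $h_{n,0}$ are correct, but the step where you move the absolute value inside the $\alpha$-integral and factorise, obtaining $\sum_t\abs{h_{n,0}(t)}\le\int_0^1 g(\alpha)^{2q}\,d\alpha$, already loses the lemma irrecoverably: the right-hand side is genuinely of order $(n\log n)^{2q}$, not $n^{2q}(\log n)^{2q-1}$. Indeed, whenever the fractional part of $n\alpha$ lies in $[1/4,3/4]$ (a set of measure $1/2$) one has $g(\alpha)\ge c\,n\log n$, so $\int_0^1 g^{2q}\ge\tfrac12(c\,n\log n)^{2q}$. No manipulation of $g$ alone can then recover the missing logarithm: your proposed H\"older step gives $\norm{g}_\infty^{2q-1}\int_0^1 g\le(Cn\log n)^{2q-1}\cdot Cn\log n=(Cn\log n)^{2q}$, exactly the exponent you were trying to avoid, and the localisation of the small values of $g$ near the lattice $\tfrac1n\Z$ is useless because the complementary region carries almost all of the measure. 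The defect persists even if you first exploit the translation invariance of $h_{n,0}$ to fix $t_{2q}=0$ (gaining a clean factor $n$ from one variable): the resulting bound $n\int_0^1\abs{A(\alpha)}\,g(\alpha)^{2q-1}\,d\alpha$ is still of order $n\cdot(n\log n)^{2q-1}\cdot\log n=n^{2q}(\log n)^{2q}$, because the constraint-encoding integral over $\alpha$ contributes its own Lebesgue constant. In short, your method pays one logarithm for each of the $2q$ Dirichlet kernels in play (including the one hidden in the linear constraint), whereas the lemma allows only $2q-1$.

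The missing idea is to treat the exponential sum as a genuinely $(2q-1)$-dimensional object rather than as a product of one-dimensional ones. The paper substitutes $j_{2q}=q(n-1)-j_1-\cdots-j_{2q-1}$ and $s_i=t_i-t_{2q}$, so that the sum over $t_{2q}$ contributes a bare factor $n$ and what remains is $\sum_{s}\bigabs{F_n(e_n(s_1),\dots,e_n(s_{2q-1}))}$, where $F_n$ is the polynomial whose coefficients are the indicator of the lattice points of $(n-1)P$ for the slab $P=\{x\in[0,1]^{2q-1}:q-1\le x_1+\cdots+x_{2q-1}\le q\}$. The two inputs your proposal lacks are then: (i) the multidimensional estimate $\norm{F_n}_1\le\gamma(P)(\log n)^{2q-1}$ for polynomials supported on lattice points of a polyhedron, which is \emph{not} obtained by multiplying univariate Lebesgue constants and is precisely where only $2q-1$ logarithms appear; and (ii) a discretisation step (mean value theorem combined with the Bernstein-type inequality $\norm{f'}_1\le(n-1)\norm{f}_1$) converting the continuous $L^1$ norm into the sum over $n$-th roots of unity at the cost of a factor $\bigl((1+2\pi)n\bigr)^{2q-1}$. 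Without an ingredient of type (i), the Fourier-factorisation framework you set up cannot reach the exponent $2q-1$.
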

\begin{proof}
After re-indexing the summation in the definition of $h_{n,r}(t)$, the statement of the lemma is equivalent to
\begin{equation}
\sum_{t_1,\dots,t_{2q}\in\Z/n\Z}\Bigg|\sums{0\le j_1,\dots,j_{2q}<n\\j_1+\cdots+j_{2q}=q(n-1)}e_n(t_1j_1+\cdots+t_{2q}j_{2q})\Bigg|\le C_q\,n^{2q}(\log n)^{2q-1}.   \label{eqn:lemma_error_term_to_show}
\end{equation}
For a positive integer $d$ let $P\subseteq[0,1]^d$ be a polyhedron and let 
\[
F_n(z_1,\dots,z_d)=\sum_{(j_1,\dots,j_d)\in\Z^d\cap (n-1)P}z_1^{j_1}\cdots z_d^{j_d}
\]
be a polynomial in $\C[z_1,\dots,z_d]$. Write
\[
S_n=\sum_{s_1,\dots,s_d\in\Z/n\Z}\bigabs{F_n(e^{2\pi is_1/n},\dots,e^{2\pi is_d/n})}.
\]
We shall see at the end of the proof that the left hand side of~\eqref{eqn:lemma_error_term_to_show} equals $nS_n$ for a particular choice of the polyhedron $P$.
\par
The $L^1$ norm of $F_n$ is defined to be
\[
\norm{F_n}_1=\frac{1}{(2\pi)^d}\int_0^{2\pi}\cdots\int_0^{2\pi}\bigabs{F_n(e^{i\theta_1},\dots,e^{i\theta_d})}\,d\theta_1\cdots d\theta_d.
\]
It is known (see~\cite[9.2.1]{TriBel2004}, for example) that
\begin{equation}
\norm{F_n}_1\le \gamma(P)(\log n)^d,   \label{eqn:L1_bound}
\end{equation}
where $\gamma(P)$ depends only on the polyhedron $P$. We shall find an upper bound for $S_n$ in terms of $\norm{F_n}_1$.
\par
Let $f(z)$ be a polynomial in $\C[z]$. By the mean value theorem there exist real numbers $\theta_0,\dots,\theta_{n-1}$ with $\theta_s\in[2\pi s/n,2\pi (s+1)/n]$ for all $s$ such that
\begin{equation}
\norm{f}_1=\frac{1}{2\pi}\sum_{s=0}^{n-1}\int_{2\pi s/n}^{2\pi(s+1)/n}\bigabs{f(e^{i\theta})}\,d\theta=\frac{1}{n}\sum_{s=0}^{n-1}\bigabs{f(e^{i\theta_s})}.   \label{eqn:L1_mean_value_thm}
\end{equation}
By the triangle inequality we have
\begin{align}
\left|\sum_{s=0}^{n-1}\bigabs{f(e^{i\theta_s})}-\sum_{s=0}^{n-1}\bigabs{f(e^{2\pi is/n})}\right|&\le \sum_{s=0}^{n-1}\bigabs{f(e^{i\theta_s})-f(e^{2\pi is/n})}   \nonumber\\
&=\sum_{s=0}^{n-1}\left|\int_{2\pi s/n}^{\theta_s}f'(e^{i\theta})\,d\theta\right|   \nonumber\\
&\le \int_{0}^{2\pi}\bigabs{f'(e^{i\theta})}\,d\theta   \nonumber\\
&=2\pi\norm{f'}_1.   \label{eqn:bound_L1_derivative}
\end{align}
Now suppose that $f$ has degree at most $n-1$. Then $\norm{f'}_1\le (n-1)\,\norm{f}_1$ by a Bernstein-type inequality (see~\cite[p.~143]{Bor2002} or~\cite[p.~11]{Zyg2002}, for example). Combination of~\eqref{eqn:L1_mean_value_thm} and~\eqref{eqn:bound_L1_derivative} then gives
\[
\sum_{s=0}^{n-1}\bigabs{f(e^{2\pi is/n})}\le (1+2\pi)n\,\norm{f}_1.
\]
Since $F_n(z_1,\dots,z_d)$ has degree at most $n-1$ in each indeterminate, we find by a straightforward induction that
\[
S_n\le (1+2\pi)^dn^d\,\norm{F_n}_1,
\]
and then with~\eqref{eqn:L1_bound},
\begin{equation}
S_n\le (1+2\pi)^d\gamma(P)(n\log n)^d.   \label{eqn:S_n_inequality}
\end{equation}
\par
Now we take $d=2q-1$ and
\[
P=\left\{(x_1,\dots,x_{2q-1})\in\R^{2q-1}:\stack{0\le x_1,\dots,x_{2q-1}\le 1,}{q-1\le x_1+\cdots+x_{2q-1}\le q}\right\}.
\]
Set $j_{2q}=q(n-1)-j_1-\dots-j_{2q-1}$ and $s_i=t_i-t_{2q}$ for all $i\in\{1,2,\dots,2q-1\}$ in~\eqref{eqn:lemma_error_term_to_show} to see that the left hand side of~\eqref{eqn:lemma_error_term_to_show} equals
\[
\sum_{t_{2q}\in\Z/n\Z}S_n=nS_n,
\]
so that the desired inequality~\eqref{eqn:lemma_error_term_to_show} follows from~\eqref{eqn:S_n_inequality}.
\end{proof}

%%%%%%%%%%%%%%%%%%%%%%%%%%%%%%%%%%%%%%%%%%%%%%%%%%%%%%%%%%%%%%%%%%%%%%%%%%%%

\section{Fekete polynomials}
\label{sec:fekete}

In this section we prove Theorem~\ref{thm:norm_shifted_fekete} (and therefore also Theorem~\ref{thm:norm_fekete}) and Corollary~\ref{cor:norm_fekete}.
\par
We say that a tuple $(t_1,t_2,\dots,t_{2q})$ is \emph{even} if there exists a permutation~$\sigma$ of $\{1,2,\dots,2q\}$ such that $t_{\sigma(2k-1)}=t_{\sigma(2k)}$ for all $k\in\{1,2,\dots,q\}$. For example, $(2,1,1,3,2,3)$ is even, whereas $(2,1,1,3,1,3)$ is not even. Let $\Ec_q(n)$ be the set of even tuples in $(\Z/n\Z)^{2q}$.
\par
We begin with the following lemma.
\begin{lemma}
\label{lem:norm_even_tuples}
Let $q$ be a positive integer and let $f_p^r(z)$ be a shifted Fekete polynomial corresponding to the Fekete polynomial of degree $p-1$.~Then
\[
\lim_{p\to\infty}\left(\frac{\norm{f_p^r}_{2q}}{\sqrt{p}}\right)^{2q}=\lim_{p\to\infty}\frac{1}{p^{2q}}\sum_{t\in\Ec_q(p)}h_{p,r}(t),
\]
provided that one of the limits exists.
\end{lemma}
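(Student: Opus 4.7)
The plan is to invoke Proposition~\ref{pro:norm_from_L_and_h} with $f=f_p$ and $n=p$, which reduces the task to estimating the values $L_{f_p}(t)$ sharply enough. The key input is the classical Gauss sum identity
\[
f_p(e_p(s))=\leg{s}{p}\tau_p\quad\text{for all $s\in\Z/p\Z$},
\]
where $\tau_p=\sum_{j=0}^{p-1}\leg{j}{p}e_p(j)$ satisfies $\abs{\tau_p}^2=p$ (and we take $\leg{0}{p}=0$). Substituting this into the definition of $L_{f_p}(t)$, the factors of $\tau_p$ and $\overline{\tau_p}$ collect into $\abs{\tau_p}^{2q}=p^q$, leaving
\[
L_{f_p}(t_1,\dots,t_{2q})=\frac{1}{p}\sum_{m\in\Z/p\Z}\leg{\prod_{k=1}^{2q}(m+t_k)}{p}.
\]

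Next I would split the tuples into those that are even and those that are not. If $t\in\Ec_q(p)$, then the multiset $\{t_1,\dots,t_{2q}\}$ pairs up by definition, so $\prod_{k}(X+t_k)$ is a perfect square in $\F_p[X]$. The inner Legendre symbol then equals $1$ except at the at most $q$ distinct roots, giving $\abs{L_{f_p}(t)-1}\le q/p$. If $t\notin\Ec_q(p)$, some element of the multiset occurs with odd multiplicity, so $\prod_{k}(X+t_k)$ is not of the form $c\cdot g(X)^2$ in $\F_p[X]$. Weil's bound for character sums with polynomial arguments then yields
\[
\abs{L_{f_p}(t)}\le\frac{2q-1}{\sqrt{p}}.
\]

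Combining these two estimates, the difference between $\norm{f_p^r}_{2q}^{2q}/p^q$ (as given by Proposition~\ref{pro:norm_from_L_and_h}) and $p^{-2q}\sum_{t\in\Ec_q(p)}h_{p,r}(t)$ is bounded in absolute value by
\[
\left(\frac{q}{p}+\frac{2q-1}{\sqrt{p}}\right)\cdot\frac{1}{p^{2q}}\sum_{t\in(\Z/p\Z)^{2q}}\abs{h_{p,r}(t)}.
\]
Lemma~\ref{lem:error_term} controls the final factor by $C_q(\log p)^{2q-1}$, so the total error is $O(p^{-1/2}(\log p)^{2q-1})$, which tends to $0$ as $p\to\infty$. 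Hence the two expressions in the lemma differ by $o(1)$, so the existence of one limit forces the existence and equality of the other. The main obstacle is the invocation of Weil's character sum bound, which is what beats the logarithmic factor of Lemma~\ref{lem:error_term}; once this input is available, the rest is bookkeeping.
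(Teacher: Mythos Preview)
Your proposal is correct and essentially identical to the paper's proof: both invoke Proposition~\ref{pro:norm_from_L_and_h}, use the Gauss sum evaluation to reduce $L_{f_p}(t)$ to a normalized Legendre-symbol sum, split according to whether $t$ is even, apply the Weil bound for the non-even case, and finish with Lemma~\ref{lem:error_term}. The only cosmetic difference is that the paper bounds $|L_{f_p}(t)-J_p(t)|$ uniformly by $(2q-1)p^{-1/2}$ (absorbing the $q/p$ term), whereas you keep the two contributions separate; the resulting $O(p^{-1/2}(\log p)^{2q-1})$ error is the same.
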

\begin{proof}
Let $f_p(z)$ be the Fekete polynomial of degree $p-1$. For $t\in(\Z/p\Z)^{2q}$, let $J_p(t)$ be the indicator function that equals one if $t$ is even and is zero otherwise. From Proposition~\ref{pro:norm_from_L_and_h} we find that
\[
\left(\frac{\norm{f_p^r}_{2q}}{\sqrt{p}}\right)^{2q}\!\!=\frac{1}{p^{2q}}\sum_{t\in(\Z/p\Z)^{2q}}\!\!J_p(t)h_{p,r}(t)+\frac{1}{p^{2q}}\sum_{t\in(\Z/p\Z)^{2q}}\!\!\left(L_{f_p}(t)-J_p(t)\right)h_{p,r}(t).
\]
We show that the second sum on the right hand side tends to zero. This will prove the lemma since 
\[
\sum_{t\in(\Z/p\Z)^{2q}}J_p(t)h_{p,r}(t)=\sum_{t\in\Ec_q(p)}h_{p,r}(t).
\]
Notice that $f_p(e_p(k))$ is a quadratic Gauss sum, whose explicit evaluation is~\cite{BerEvaWil1998}
\[
f_p(e_p(k))=i^{(p-1)^2/4}p^{1/2}\,\leg{k}{p}.
\]
Therefore
\[
L_{f_p}(t_1,\dots,t_{2q})=\frac{1}{p}\sum_{m=0}^{p-1}\leg{m+t_1}{p}\cdots\leg{m+t_{2q}}{p}.
\]
If $(t_1,\dots,t_{2q})$ is even, then it is readily verified that
\[
1-q/p\le L_{f_p}(t_1,\dots,t_{2q})\le 1-1/p.
\]
On the other hand, if $(t_1,\dots,t_{2q})$ is not even, then the Weil bound for sums over multiplicative characters~\cite[Lemma~9.25]{MonVau1997},~\cite[Theorem~5.41]{LidNie1997} gives
\[
\abs{L_{f_p}(t_1,\dots,t_{2q})}\le (2q-1)p^{-1/2}.
\]
Therefore
\[
\bigabs{L_{f_p}(t)-J_p(t)}\le (2q-1)p^{-1/2}\quad\text{for all $t\in(\Z/p\Z)^{2q}$}.
\]
By the triangle inequality we then find that
\[
\frac{1}{p^{2q}}\Biggabs{\sum_{t\in(\Z/p\Z)^{2q}}\left(L_{f_p}(t)-J_p(t)\right)h_{p,r}(t)}\le \frac{2q-1}{p^{2q+1/2}}\sum_{t\in(\Z/p\Z)^{2q}}\abs{h_{p,r}(t)},
\]
which tends to zero as $p\to\infty$ by Lemma~\ref{lem:error_term}, as required.
\end{proof}
\par
In what follows, we shall evaluate the right hand side of the expression in Lemma~\ref{lem:norm_even_tuples}.
\par
Let $t=(t_1,t_2,\dots,t_m)$ be a tuple in $(\Z/n\Z)^m$ and let $\pi\in\Pi_m$. We define $t\prec\pi$ to be true if and only if $t_j=t_k$ whenever $j$ and $k$ belong to the same block of $\pi$. For example, if $t=(1,2,1)$ and $\pi=\{\{1,3\},\{2\}\}$, then $t\prec\pi$ holds.
\par
\begin{lemma}
\label{lem:even_tuple_in_ex}
Let $h:\Ec_q(n)\to\C$ be an arbitrary function and let $T(k)$ be the $k$-th signed tangent number. Then
\begin{equation}
\sum_{t\in\Ec_q(n)}h(t)=\sums{\pi\in\Pi_{2q}\\[0.5ex]\text{$\pi$ even}}\;\sums{t\in\Ec_q(n)\\ t\prec \pi} h(t)\,\prod_{B\in\pi}T(\tfrac{1}{2}\abs{B}).   \label{eqn:in_ex_set_part}
\end{equation}
\end{lemma}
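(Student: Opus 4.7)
The plan is to prove Lemma~\ref{lem:even_tuple_in_ex} by swapping the order of summation on the right-hand side and then showing that, for each fixed tuple $t\in(\Z/n\Z)^{2q}$, the resulting inner sum over even partitions equals the indicator that $t$ is even. Write $\pi(t)$ for the partition of $\{1,\dots,2q\}$ induced by the equivalence $i\sim j\iff t_i=t_j$; I shall call its blocks the value-blocks of $t$. The condition $t\prec\pi$ is equivalent to saying that $\pi$ refines $\pi(t)$, so the blocks of any admissible $\pi$ in the inner sum are obtained by partitioning each value-block of $t$ independently. First I would observe that the inner summation condition $t\in\Ec_q(n)$ on the right-hand side is automatic: if $\pi$ is even and $t\prec\pi$, then each value-block of $t$ is a union of even blocks of $\pi$, hence has even cardinality, so $t$ is even.

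Second, I would swap the two sums and factor the inner sum over value-blocks. For an arbitrary $t\in(\Z/n\Z)^{2q}$ with value-blocks $V_1,\dots,V_s$,
\[
S(t):=\sums{\pi\in\Pi_{2q}\\ \pi\text{ even}\\ t\prec\pi}\;\prod_{B\in\pi}T(\tfrac{1}{2}\abs{B})
=\prod_{i=1}^{s}\;\sums{\tau\in\Pi_{V_i}\\\tau\text{ even}}\;\prod_{C\in\tau}T(\tfrac{1}{2}\abs{C}),
\]
since every admissible $\pi$ is determined by independent even partitions of each $V_i$, and since no even partition of a set of odd cardinality exists (forcing $S(t)=0$ whenever some $\abs{V_i}$ is odd).

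Third, I would establish that, for each even positive integer $2m$,
\[
\sums{\tau\in\Pi_{2m}\\\tau\text{ even}}\;\prod_{C\in\tau}T(\tfrac{1}{2}\abs{C})=1.
\]
This is the exponential-formula identity applied to the generating function in~\eqref{eqn:log_cosh}: because $T(k)$ is the coefficient of $z^{2k}/(2k)!$ in $\log\cosh(z)$ (with odd-degree coefficients zero), Lemma~\ref{lem:exp_formula} yields
\[
\cosh(z)=\exp\!\bigg(\sum_{k\ge 1}T(k)\,\frac{z^{2k}}{(2k)!}\bigg)=\sum_{m\ge 0}\frac{z^{2m}}{(2m)!}\sums{\tau\in\Pi_{2m}\\\tau\text{ even}}\;\prod_{C\in\tau}T(\tfrac{1}{2}\abs{C}),
\]
and comparing coefficients of $z^{2m}/(2m)!$ with the Taylor expansion of $\cosh(z)$ gives the claimed value $1$.

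Combining these steps, $S(t)$ equals $1$ exactly when every value-block of $t$ has even cardinality, i.e.\ when $t\in\Ec_q(n)$, and is zero otherwise; hence the right-hand side of~\eqref{eqn:in_ex_set_part} collapses to $\sum_{t\in\Ec_q(n)}h(t)$. The only substantive step is the appeal to the exponential formula, and this is precisely the content of Lemma~\ref{lem:exp_formula} already invoked elsewhere in the paper, so no genuine obstacle remains—the argument is essentially a bookkeeping exercise once the factorisation over value-blocks is made explicit.
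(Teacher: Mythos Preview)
Your proof is correct and follows essentially the same approach as the paper. Both arguments reduce the identity to showing that $\sum_{\pi\text{ even},\,t\prec\pi}\prod_{B\in\pi}T(\tfrac12\abs{B})=1$ for each even tuple $t$, factor this sum over the value-blocks of $t$, and evaluate each factor via Lemma~\ref{lem:exp_formula} applied to $F(z)=\log\cosh(z)$; the only cosmetic difference is that the paper reduces by linearity to $h$ the indicator of a single tuple, whereas you interchange the order of summation directly.
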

\par
To prove the lemma, we shall need the following combinatorial principle (see~\cite[p.~5]{Sta1999}, for example), in which $\N=\{1,2,3,\dots\}$.
\begin{lemma}
\label{lem:exp_formula}
Let $K$ be a field of characteristic $0$, let $f:\N\to K$ be arbitrary, and define a new function $g:\N\cup\{0\}\to K$ by $g(0)=1$ and
\[
g(k)=\sum_{\pi\in\Pi_k}\,\prod_{B\in\pi}f(\abs{B})\quad\text{for $k\ge 1$}.
\]
Let $G(z)=\sum_{k\ge 0}g(k)z^k/k!$ and $F(z)=\sum_{k\ge 1}f(k)z^k/k!$ be the corresponding exponential generating functions. Then $G(z)=\exp(F(z))$. Moreover,
\[
g(k)=\sum_{j=1}^k{k-1\choose j-1}f(j)g(k-j)\quad\text{for $k\ge 1$}.
\]
\end{lemma}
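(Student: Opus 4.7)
The plan is to verify $G(z)=\exp(F(z))$ by expanding the exponential and matching coefficients combinatorially, then to derive the recurrence by differentiation.

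First I would write
\[
\exp(F(z))=\sum_{\ell\ge 0}\frac{F(z)^\ell}{\ell!}=\sum_{\ell\ge 0}\frac{1}{\ell!}\sums{k_1,\dots,k_\ell\ge 1}\;\prod_{i=1}^\ell \frac{f(k_i)}{k_i!}\;z^{k_1+\cdots+k_\ell}.
\]
Collecting terms of total degree $k$ and multiplying and dividing by $k!$, the coefficient of $z^k/k!$ equals
\[
\sum_{\ell\ge 0}\frac{1}{\ell!}\sums{k_1+\cdots+k_\ell=k\\k_1,\dots,k_\ell\ge 1}{k\choose k_1,\dots,k_\ell}\prod_{i=1}^\ell f(k_i).
\]
The next step is the combinatorial identification: the multinomial coefficient ${k\choose k_1,\dots,k_\ell}$ counts ordered set partitions $(B_1,\dots,B_\ell)$ of $\{1,\dots,k\}$ with $|B_i|=k_i$, so the inner sum equals $\sum_{(B_1,\dots,B_\ell)}\prod_i f(|B_i|)$ where the sum is over ordered set partitions with exactly $\ell$ nonempty blocks. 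Since every unordered partition with $\ell$ blocks arises from exactly $\ell!$ distinct orderings (blocks are distinguishable as subsets), dividing by $\ell!$ gives $\sum_{\pi\in\Pi_k^{(\ell)}}\prod_{B\in\pi}f(|B|)$, where $\Pi_k^{(\ell)}$ denotes partitions with $\ell$ blocks. Summing over $\ell$ yields $g(k)$, which proves $G(z)=\exp(F(z))$ (the case $k=0$ being trivial since both sides equal $1$).

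For the recurrence, I would differentiate the identity $G(z)=\exp(F(z))$ to obtain $G'(z)=F'(z)\,G(z)$. Writing
\[
G'(z)=\sum_{k\ge 1}g(k)\,\frac{z^{k-1}}{(k-1)!}, \qquad F'(z)=\sum_{j\ge 1}f(j)\,\frac{z^{j-1}}{(j-1)!},
\]
and multiplying out $F'(z)G(z)$ as a product of exponential generating series, the coefficient of $z^{k-1}/(k-1)!$ in $F'(z)G(z)$ equals
\[
\sum_{j=1}^k{k-1\choose j-1}f(j)\,g(k-j),
\]
by the usual binomial convolution for exponential generating functions. Equating coefficients with those of $G'(z)$ yields the claimed recurrence for $k\ge 1$.

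There is no real obstacle here; the only point requiring care is the bookkeeping in step one, namely that the $\ell!$ overcount corresponds exactly to orderings of the blocks and therefore turns ordered set partitions into unordered ones. Everything else is a standard manipulation of exponential generating functions in characteristic zero, which is precisely the hypothesis on $K$ that allows one to divide by $k!$ and to form the formal exponential.
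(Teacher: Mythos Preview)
Your proof is correct. The second part---differentiating $G=\exp(F)$ to get $G'=F'G$ and equating coefficients---is exactly what the paper does. For the first part, however, you take a different route: the paper invokes Fa\`a di Bruno's formula for $(E\circ F)^{(k)}$ with $E(z)=\exp(z)$ and evaluates at $z=0$, whereas you expand $\exp(F(z))=\sum_{\ell\ge 0}F(z)^\ell/\ell!$ directly, identify the multinomial coefficient as a count of ordered set partitions, and then pass to unordered partitions by dividing by~$\ell!$. Your argument is more elementary and self-contained (no appeal to an external formula), while the paper's is terser if one already has Fa\`a di Bruno at hand; both arrive at the same combinatorial identity and the same use of characteristic zero.
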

\begin{proof}
The first part of the lemma is a consequence of Fa\'a di Bruno's generalisation of the chain rule (see~\cite[Theorem~1.3.2]{KraPar2002}, for example), which states that, for a formal power series $E(z)$ and $k\ge 1$, we have
\[
(E\circ F)^{(k)}(z)=\sum_{\pi\in\Pi_k} (E^{(\abs{\pi})}\circ F)(z)\prod_{B\in\pi}F^{(\abs{B})}(z).
\]
Take $E(z)=\exp(z)$ and set $z=0$ to see that the right hand side equals $g(k)$, which proves the first part. The second part follows from $G'(z)=G(z)F'(z)$ by equating coefficients.
\end{proof}
\par
For a tuple $t\in(\Z/n\Z)^m$, let $\pi\in\Pi_m$ be the coarsest partition of $\{1,2,\dots,m\}$ with the property $t\prec \pi$ and define $m_k(t)$ to be the number of blocks $B$ in~$\pi$ such that $\abs{B}=k$. For example, if $t=(1,3,2,1,2)$, then the coarsest partition $\pi$ with $t\prec\pi$ is $\{\{1,4\},\{3,5\},\{2\}\}$ and we have $m_1(t)=1$, $m_2(t)=2$, and $m_k(t)=0$ for $k>2$.
\par
We now give a proof of Lemma~\ref{lem:even_tuple_in_ex}.
\begin{proof}[Proof of Lemma~\ref{lem:even_tuple_in_ex}]
Taking $F(z)=\log\cosh(z)$ in Lemma~\ref{lem:exp_formula} (so that $G(z)=\cosh(z)$), we find with~\eqref{eqn:log_cosh} and $\cosh(z)=\sum_{k\ge 0}z^{2k}/(2k)!$ that
\begin{equation}
\sums{\pi\in\Pi_{2k}\\[0.5ex]\text{$\pi$ even}}\;\prod_{B\in\pi}T(\tfrac{1}{2}\abs{B})=1\quad\text{for each $k\ge 1$}.   \label{eqn:sum_T}
\end{equation}
Let $s\in\Ec_q(n)$ be an even tuple. By linearity, it suffices to prove the lemma for the case that $h(x)=1$ for $x=s$ and $h(x)=0$ otherwise. Clearly, the left hand side of~\eqref{eqn:in_ex_set_part} equals $1$. On the other hand, the sum
\[
\sums{t\in\Ec_q(n)\\ t\prec \pi} h(t)
\]
is just the indicator function of the event $s\prec\pi$, so we can restrict the outer summation on the right hand side of~\eqref{eqn:in_ex_set_part} to the even partitions that are refinements of the coarsest partition $\pi\in\Pi_{2q}$ with the property $s\prec\pi$. Therefore the right hand side of~\eqref{eqn:in_ex_set_part} equals
\[
\prod_{k=1}^q\left( \sums{\pi\in\Pi_{2k}\\[0.5ex]\text{$\pi$ even}}\prod_{B\in\pi}T(\tfrac{1}{2}\abs{B})\right)^{m_k(s)},
\]
which again equals $1$ by~\eqref{eqn:sum_T}. 
\end{proof}
\par
Next we evaluate the inner sums in the right hand side of~\eqref{eqn:in_ex_set_part} for $h=h_{n,r}$.
\begin{lemma}
\label{lem:partial_sums_fekete}
Let $\pi=\{B_1,\dots,B_\ell\}\in\Pi_{2q}$ be an even partition with $\ell$ blocks. Write $N_i=\abs{B_i}/2$ and $P_i=\abs{\{x\in B_i:x>q\}}$. If $r/n\to R$ as $n\to\infty$,~then  
\[
\lim_{n\to\infty}\frac{1}{n^{2q}}\sums{t\in\Ec_q(n)\\t\prec\pi}h_{n,r}(t)=\!\sums{a_1,\dots,a_\ell\in\Z\\a_1+\cdots+a_\ell=q}\;\prod_{i=1}^\ell\;\frac{1}{(2N_i-1)!}{2N_i-1\bangle 2R(N_i-P_i)+a_i-1}.
\]
\end{lemma}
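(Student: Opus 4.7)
My plan is to reduce the sum to a product over the blocks of $\pi$ and to identify each factor as a generalized Eulerian number via a Riemann-sum style asymptotic. First I would parametrize: a tuple $t \prec \pi$ is uniquely determined by block values $(u_1,\dots,u_\ell) \in (\Z/n\Z)^\ell$. Set $B_i^+ = B_i \cap \{q+1,\dots,2q\}$ and $B_i^- = B_i \cap \{1,\dots,q\}$, so that $|B_i^+| = P_i$ and $|B_i^-| = 2N_i - P_i$, and put $T_i = \sum_{k \in B_i^+} j_k - \sum_{k \in B_i^-} j_k$. The contribution of $u_i$ to the exponential in $h_{n,r}$ is $e_n\bigl(u_i(T_i - 2r(N_i - P_i))\bigr)$, and summing over $u_i \in \Z/n\Z$ yields a factor $n$ times the indicator of $n \mid T_i - 2r(N_i - P_i)$. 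Hence $\sum_{t \prec \pi} h_{n,r}(t)$ equals $n^\ell$ times the number of $(j_1,\dots,j_{2q}) \in \{0,\dots,n-1\}^{2q}$ satisfying $\sum_i T_i = 0$ (equivalently $j_1+\cdots+j_q = j_{q+1}+\cdots+j_{2q}$) together with $T_i \equiv 2r(N_i - P_i) \pmod n$ for each $i$. Since $\sum_i(N_i - P_i) = q - q = 0$, these congruences are compatible with $\sum_i T_i = 0$, and I can parametrize the admissible $(T_i)$ by $T_i = n a_i + 2r(N_i - P_i)$ with $a_1 + \cdots + a_\ell = 0$.

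Next I would treat each block independently. The number of $(j_k)_{k \in B_i} \in \{0,\dots,n-1\}^{2N_i}$ realizing a prescribed $T_i$ equals the coefficient $[z^{m_i}]\bigl((1-z^n)/(1-z)\bigr)^{2N_i}$ with $m_i = T_i + (n-1)(2N_i - P_i)$, as seen by rearranging the generating function $\bigl(\sum_{j=0}^{n-1} z^j\bigr)^{P_i} \bigl(\sum_{j=0}^{n-1} z^{-j}\bigr)^{2N_i - P_i}$. Expanding by the binomial theorem and applying the pointwise estimate $\binom{m_i - jn + 2N_i - 1}{2N_i - 1} = \frac{n^{2N_i - 1}}{(2N_i - 1)!}(m_i/n - j)^{2N_i - 1} + O(n^{2N_i - 2})$ for $0 \le j \le m_i/n$ yields
\[
\frac{1}{n^{2N_i - 1}}\,[z^{m_i}]\bigl((1-z^n)/(1-z)\bigr)^{2N_i} \longrightarrow \frac{1}{(2N_i - 1)!}\,{2N_i - 1 \bangle m_i/n - 1},
\]
which matches definition~\eqref{eqn:def:Eulerian_numbers} exactly. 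Under $r/n \to R$ one has $m_i/n \to a_i + (2N_i - P_i) + 2R(N_i - P_i)$.

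Finally I would assemble. Because $\ell + \sum_i(2N_i - 1) = 2q$, the factor $n^{-2q}$ exactly balances $n^\ell \prod_i n^{2N_i - 1}$, and the change of variables $a_i \mapsto a_i - (2N_i - P_i)$ converts $\sum_i a_i = 0$ to $\sum_i a_i = q$ while reducing the shift in the Eulerian number to $2R(N_i - P_i) + a_i - 1$. This produces precisely the formula stated in the lemma. The only delicate point is the interchange of limit and sum over $(a_1,\dots,a_\ell) \in \Z^\ell$, but this is harmless: $[z^{m_i}]\bigl((1-z^n)/(1-z)\bigr)^{2N_i}$ vanishes outside $0 \le m_i \le 2N_i(n-1)$, so each $a_i$ is confined to a bounded interval independent of $n$, and on this finite range the block asymptotic is uniform because the generalized Eulerian number is a continuous piecewise-polynomial function of its argument.
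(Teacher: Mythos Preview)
Your argument is correct and follows essentially the same route as the paper's proof: parametrize $t\prec\pi$ by block values, use orthogonality to reduce to counting $(j_k)$ with prescribed block sums modulo $n$, factor over blocks, and identify the normalized block counts as generalised Eulerian numbers via Lemma~\ref{lem:asymptotic_counting_lemma} (which you rederive inline via the binomial expansion of $((1-z^n)/(1-z))^{2N_i}$). The only cosmetic differences are that the paper replaces $j_k\mapsto n-1-j_k$ on the negative indices instead of passing through generating functions, and that it packages the asymptotic as a separate lemma rather than expanding it in place.
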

\par
To prove the lemma, we use the following asymptotic counting result, which follows from known results on the number of restricted integer compositions~\cite{FieAlf1991},~\cite{Ege2013} or, alternatively, from integration results over a simplex~\cite{GooTid1978}. By $I[E]$ we denote the indicator function of an event $E$.
\begin{lemma}
\label{lem:asymptotic_counting_lemma}
Let $N$ be a positive integer and let $M$ be real. Let $(m_n)$ be a sequence of integers such that $m_n/n\to M$ as $n\to\infty$. Then
\[
\lim_{n\to\infty}\frac{1}{n^{N-1}}\sum_{0\le j_1,\dots,j_N<n}I\big[j_1+\dots+j_N=m_n\big]=\frac{1}{(N-1)!}{N-1\bangle M-1}.
\]
\end{lemma}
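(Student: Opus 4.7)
The plan is to count the integer points $(j_1,\ldots,j_N) \in \{0,1,\ldots,n-1\}^N$ with sum exactly $m_n$ by inclusion-exclusion, and then pass to the termwise limit. First I would apply standard stars-and-bars together with inclusion-exclusion on the constraints $j_i \le n-1$: the number of nonnegative integer solutions to $j_1 + \cdots + j_N = m_n$ with a prescribed set of $j$ variables forced to be at least $n$ (via the substitution $j_i \mapsto j_i - n$) equals $\binom{m_n - jn + N - 1}{N-1}$, interpreted as $0$ when $m_n - jn < 0$. Thus
\[
C_n := \sum_{0 \le j_1, \ldots, j_N < n} I[j_1 + \cdots + j_N = m_n] = \sum_{j=0}^{N} (-1)^j \binom{N}{j} \binom{m_n - jn + N - 1}{N-1}.
\]

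Next I would compute the termwise limit. For each fixed $j$,
\[
\frac{1}{n^{N-1}} \binom{m_n - jn + N - 1}{N-1} = \frac{1}{(N-1)!} \prod_{k=1}^{N-1} \frac{m_n - jn + k}{n} \longrightarrow \frac{(M-j)^{N-1}}{(N-1)!}
\]
whenever $M - j > 0$, while for $j > M$ the binomial coefficient vanishes identically for all sufficiently large $n$ (since $m_n - jn \to -\infty$) and hence contributes $0$ in the limit. Since the inclusion-exclusion sum has only finitely many terms, I may interchange limit and summation to obtain
\[
\lim_{n \to \infty} \frac{C_n}{n^{N-1}} = \frac{1}{(N-1)!} \sum_{j=0}^{\lfloor M \rfloor} (-1)^j \binom{N}{j} (M-j)^{N-1},
\]
which is exactly $\frac{1}{(N-1)!}\,{N-1 \bangle M-1}$ by the defining formula~\eqref{eqn:def:Eulerian_numbers}.

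The only subtlety, and the step that requires care, is the boundary case in which $M$ is a positive integer: $m_n/n$ may then approach $M$ from either side, so the index $j = M$ is active in the inclusion-exclusion for some $n$ and inactive for others. However, at $j = M$ the limiting expression has $(M-j)^{N-1} = 0$, and for finite $n$ the corresponding term $\binom{m_n - Mn + N - 1}{N-1}/n^{N-1}$ is $o(1)$ because $m_n - Mn = o(n)$. Hence the boundary term vanishes in the limit regardless of which side $m_n/n$ approaches from, in agreement with the upper summation index $\lfloor M \rfloor$ in the definition of the generalised Eulerian numbers.
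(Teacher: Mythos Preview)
Your proof is correct and follows essentially the same route as the paper's: the paper simply quotes the inclusion--exclusion identity $C_n=\sum_{j=0}^N(-1)^j\binom{N}{j}\binom{N+m_n-nj-1}{N-1}$ from the literature and then passes to the termwise limit using $\lim_{n\to\infty}n^{-(N-1)}\binom{N+m_n-nj-1}{N-1}=\frac{1}{(N-1)!}(\max(0,M-j))^{N-1}$. Your version derives the identity via stars-and-bars and gives a more explicit treatment of the boundary term $j=M$, but the argument is otherwise identical.
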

\begin{proof}
It is well known (see~\cite[(11)]{FieAlf1991} or~\cite[Example~33]{Ege2013}, for example) that
\[
\sum_{0\le j_1,\dots,j_N<n}I\big[j_1+\dots+j_N=m_n\big]=\sum_{j=0}^N(-1)^j{N\choose j}{N+m_n-nj-1\choose N-1}.
\]
Since
\[
\lim_{n\to\infty}\frac{1}{n^{N-1}}{N+m_n-nj-1\choose N-1}=\frac{1}{(N-1)!}\,(\max(0,M-j))^{N-1},
\]
the lemma follows from the definition~\eqref{eqn:def:Eulerian_numbers} of the generalised Eulerian numbers.
\end{proof}
We now prove Lemma~\ref{lem:partial_sums_fekete}.
\begin{proof}[Proof of Lemma~\ref{lem:partial_sums_fekete}]
Put
\[
H_n=\sums{t\in\Ec_q(n)\\t\prec\pi}h_{n,r}(t).
\]
Let $\e_k=-1$ for $k\le q$ and $\e_k=1$ for $k>q$. Since
\[
h_{n,r}(t_1,\dots,t_{2q})=\sums{0\le j_1,\dots,j_{2q}<n\\j_1+\dots+j_q=j_{q+1}+\dots+j_{2q}}\prod_{i=1}^{\ell}\prod_{k\in B_i}e_n(\e_kt_k(j_k+r)),
\]
we can rewrite $H_n$ as
\[
H_n=\sums{0\le j_1,\dots,j_{2q}<n\\j_1+\dots+j_q=j_{q+1}+\dots+j_{2q}}\prod_{i=1}^\ell\;\sum_{t\in\Z/n\Z}\;e_n\bigg(t\sum_{k\in B_i}\e_k(j_k+r)\bigg).
\]
The product is either zero or equals $n^\ell$ and is nonzero exactly when there exist $a_1,\dots,a_\ell\in\Z$ such that
\begin{equation}
\sum_{k\in B_i}\e_k(j_k+r)=a_in   \label{eqn:sum_B_i_condition}
\end{equation}
for all $i\in\{1,\dots,\ell\}$. Hence
\[
H_n=n^\ell\sums{0\le j_1,\dots,j_{2q}<n\\j_1+\dots+j_q=j_{q+1}+\dots+j_{2q}}\sum_{a_1,\dots,a_\ell\in\Z}\;\prod_{i=1}^\ell\;I\Bigg[\sum_{k\in B_i}\e_k(j_k+r)=a_in\Bigg].
\]
Summing both sides of~\eqref{eqn:sum_B_i_condition} over $i\in\{1,\dots,\ell\}$ gives
\[
\sum_{k=1}^q(j_{q+k}-j_k)=n\sum_{i=1}^\ell a_i,
\]
so that
\[
H_n=n^\ell\sums{a_1,\dots,a_\ell\in\Z\\a_1+\cdots+a_\ell=0}\sum_{0\le j_1,\dots,j_{2q}<n}\;\prod_{i=1}^\ell\;I\Bigg[\sum_{k\in B_i}\e_k(j_k+r)=a_in\Bigg].
\]
The $i$-th factor within the inner sum depends only on $\abs{B_i}=2N_i$ of the summation variables in the inner sum, so that we can factor the inner sum as follows
\[
\prod_{i=1}^\ell\;\sum_{0\le j_1,\dots,j_{2N_i}<n}\;I\Bigg[\sum_{k=1}^{P_i}(j_k+r)-\sum_{k=P_i+1}^{2N_i}(j_k+r)=a_in\Bigg].
\]
Replace $j_k$ by $n-1-j_k$ for $k\in\{P_i+1,\dots,2N_i\}$ to see that this expression equals
\[
\prod_{i=1}^\ell\;\sum_{0\le j_1,\dots,j_{2N_i}<n}\;I\Bigg[\sum_{k=1}^{2N_i}j_k=(2N_i-P_i)(n-1)+2r(N_i-P_i)+a_in\Bigg].
\]
Since $\sum_{i=1}^\ell(2N_i-1)=2q-\ell$, we find from Lemma~\ref{lem:asymptotic_counting_lemma} that
\[
\lim_{n\to\infty}\frac{H_n}{n^{2q}}=\sums{a_1,\dots,a_\ell\in\Z\\a_1+\cdots+a_\ell=0}\;\prod_{i=1}^\ell\;\frac{1}{(2N_i-1)!}{2N_i-1\bangle 2N_i-P_i+2R(N_i-P_i)+a_i-1},
\]
since the outer sum is locally finite. The lemma follows after re-indexing and using $\sum_{i=1}^\ell (2N_i-P_i)=q$.
\end{proof}
\par
Theorem~\ref{thm:norm_shifted_fekete} and therefore Theorem~\ref{thm:norm_fekete} now follows from Lemmas~\ref{lem:norm_even_tuples},~\ref{lem:even_tuple_in_ex}, and~\ref{lem:partial_sums_fekete}. It remains to show how to deduce Corollary~\ref{cor:norm_fekete} from Theorem~\ref{thm:norm_fekete}. To do so, write
\begin{equation}
A_N(x)=\sum_{a=1}^{2N-1}{2N-1\bangle a-1}\,x^a,   \label{eqn:def_Eulerian_polynomials}
\end{equation}
which is known (after dividing by $x$) as an \emph{Eulerian polynomial}.  Letting $N_1,\dots,N_\ell$ be positive integers such that $N_1+\cdots+N_\ell=k$, we have
\[
\prod_{i=1}^\ell A_{N_i}(x)=\sum_{m=\ell}^{2k-\ell} x^m\sums{a_1,\dots,a_\ell\in\Z\\a_1+\cdots+a_\ell=m}\;\prod_{i=1}^\ell{2N_i-1\bangle a_i-1}.
\]
Define polynomials $F_k(x)$ by $F_k(x)=0$ for odd $k$, $F_0(x)=1$, and
\begin{equation}
F_{2k}(x)=\sums{\pi\in\Pi_{2k}\\[0.5ex]\text{$\pi$ even}}\,\prod_{i=1}^\ell \frac{T(N_i)\,A_{N_i}(x)}{(2N_i-1)!}\quad\text{for $k\ge 1$},   \label{eqn:def_F_2n}
\end{equation}
where $\pi=\{B_1,\dots,B_\ell\}$ and $N_i=\abs{B_i}/2$. Then $F_{2k}(x)$ is a polynomial of degree $2k-1$ with $F_{2k}(0)=0$ for $k\ge 1$, so we can write
\[
F_{2k}(x)=\sum_{m=1}^{2k-1} F(k,m)\,x^m\quad\text{for $k\ge 1$}.
\]
It is readily verified that Theorem~\ref{thm:norm_fekete} is equivalent to
\[
\lim_{p\to\infty}\left(\frac{\norm{f_p}_{2q}}{\sqrt{p}}\right)^{2q}=F(q,q).
\]
It remains to show that the numbers $F(k,m)$ are the same as those given in Corollary~\ref{cor:norm_fekete}. Use $F_0(x)=1$ and apply Lemma~\ref{lem:exp_formula} to~\eqref{eqn:def_F_2n} to find that
\[
F_{2k}(x)=\sum_{j=1}^k{2k-1\choose 2j-1}\frac{T(j)A_j(x)}{(2j-1)!}\,F_{2k-2j}(x)\quad\text{for $k\ge 1$}.
\]
With $F(0,0)=1$ (which equals $F_0(x)$), this is equivalent to the recursive definition of the numbers $F(k,m)$ given in Corollary~\ref{cor:norm_fekete}.

%%%%%%%%%%%%%%%%%%%%%%%%%%%%%%%%%%%%%%%%%%%%%%%%%%%%%%%%%%%%%%%%%%%%%%%%%%%%

\section{Galois polynomials}
\label{sec:galois}

In this section we prove Theorem~\ref{thm:norm_galois} and Corollary~\ref{cor:norm_galois}. We use the following notation throughout this section. A tuple $(t_1,t_2,\dots,t_{2q})$ is an \emph{abelian square} if there exists a permutation~$\sigma$ of $\{1,2,\dots,q\}$ such that $t_{\sigma(k)}=t_{q+k}$ for all $k\in\{1,2,\dots,q\}$, so that the second half of the tuple is a permutation of the first half. Let $\Ac_q(n)$ be the set of abelian squares in $(\Z/n\Z)^{2q}$.
\begin{lemma}
\label{lem:norm_abelian_squares}
Let $q$ be a positive integer and let $g_n(z)$ be a Galois polynomial of degree $n-1$. Then
\[
\lim_{n\to\infty}\left(\frac{\norm{g_n}_{2q}}{\sqrt{n}}\right)^{2q}=\lim_{n\to\infty}\;\frac{1}{n^{2q}}\sum_{t\in\Ac_q(n)}h_{n,0}(t),
\]
provided that one of the limits exists.
\end{lemma}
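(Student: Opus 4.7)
The plan is to follow closely the template of the proof of Lemma~\ref{lem:norm_even_tuples}, with the set $\Ec_q(n)$ of even tuples replaced by the set $\Ac_q(n)$ of abelian squares. Let $K_n\colon(\Z/n\Z)^{2q}\to\{0,1\}$ be the indicator function of $\Ac_q(n)$. Applying Proposition~\ref{pro:norm_from_L_and_h} with $f=g_n$ and $r=0$ yields
\[
\left(\frac{\norm{g_n}_{2q}}{\sqrt n}\right)^{2q}=\frac{1}{n^{2q}}\sum_{t\in\Ac_q(n)}h_{n,0}(t)+\frac{1}{n^{2q}}\sum_{t\in(\Z/n\Z)^{2q}}\bigl(L_{g_n}(t)-K_n(t)\bigr)h_{n,0}(t),
\]
so it suffices to show that the second sum tends to zero. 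By Lemma~\ref{lem:error_term} (with $r=0$), this will follow once I establish a uniform estimate of the shape $\abs{L_{g_n}(t)-K_n(t)}=O(n^{-1/2})$.

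The key input is the evaluation of $g_n$ at $n$-th roots of unity as a Gauss sum over the finite field. Writing $\chi_\ell$ for the multiplicative character of $\F_{2^k}^*$ defined by $\chi_\ell(\theta^j)=e_n(\ell j)$, one has $g_n(e_n(\ell))=\sum_{x\in\F_{2^k}^*}\psi(x)\chi_\ell(x)$, which has modulus $\sqrt{n+1}$ for $\ell\neq 0$ and equals $-1$ for $\ell=0$. When $t\in\Ac_q(n)$, say $t_k=t_{q+\sigma(k)}$ for a permutation $\sigma$ of $\{1,\dots,q\}$, the product in the definition of $L_{g_n}(t)$ telescopes to $\prod_k\abs{g_n(e_n(m+t_k))}^2$; this product equals $(n+1)^q$ outside an $O_q(1)$-sized set of $m\in\Z/n\Z$, whence a direct computation yields $L_{g_n}(t)=1+O_q(n^{-1})$, which is compatible with the desired bound.

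The main obstacle will be proving $\abs{L_{g_n}(t)}=O_q(n^{-1/2})$ for $t\notin\Ac_q(n)$. The plan is to substitute the Gauss sum representation into the definition of $L_{g_n}(t)$ and collapse the outer sum over $m$ via the orthogonality $\frac{1}{n}\sum_{m\in\Z/n\Z}\chi_m(w)=I[w=1]$, producing
\[
L_{g_n}(t)=\frac{1}{n^q}\sums{z_1,\dots,z_{2q}\in\F_{2^k}^*\\z_1\cdots z_q=z_{q+1}\cdots z_{2q}}\psi(z_1+\cdots+z_{2q})\prod_{k=1}^q\chi_{t_k}(z_k)\overline{\chi_{t_{q+k}}(z_{q+k})}.
\]
Parametrising the constraint variety by solving for one of the variables presents $L_{g_n}(t)$ as a $(2q-1)$-variable hybrid (additive/multiplicative) character sum over $\F_{2^k}^*$, and the hope is to bound this sum by $O_q(n^{q-1/2})$ via a Weil/Deligne-type estimate. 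The delicate issue, and the heart of the argument, is verifying the nondegeneracy condition that makes the estimate nontrivial: for abelian square $t$ the multiplicative characters pair off and the sum inflates to size $n^q$, whereas the combinatorial hypothesis $t\notin\Ac_q(n)$ is exactly what should prevent such pairing and force the required square-root cancellation. Once this uniform bound is in hand, Lemma~\ref{lem:error_term} controls the error sum by $O(n^{-1/2}(\log n)^{2q-1})$, which tends to zero as $n\to\infty$.
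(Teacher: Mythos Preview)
Your overall architecture matches the paper's proof exactly: the same splitting via Proposition~\ref{pro:norm_from_L_and_h}, the same Gauss-sum identification $g_n(e_n(\ell))=G(\chi^\ell)$, the same $1+O(n^{-1})$ computation on abelian squares, and the same appeal to Lemma~\ref{lem:error_term} to dispose of the error. Your expanded formula for $L_{g_n}(t)$ as a constrained character sum over $\F_{2^k}^*$ is also correct.

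The gap is the non-abelian-square estimate. You describe the needed bound as a ``hope'' and correctly flag the nondegeneracy verification as ``the heart of the argument,'' but you do not actually carry it out. This is not a step that can be handwaved: the square-root cancellation for the hybrid sum over the hypersurface $z_1\cdots z_q=z_{q+1}\cdots z_{2q}$ genuinely requires Deligne's work, and checking that the relevant sheaf has the right weights precisely when $t\notin\Ac_q(n)$ is nontrivial. The paper does not attempt this from scratch; it invokes a theorem of N.~Katz \cite[pp.~161--162]{Kat1988}, which gives directly
\[
\abs{L_{g_n}(t)}\le \frac{q}{n^{q+1}}(n+1)^{q+1/2}
\]
whenever $t$ is not an abelian square. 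Katz's statement is formulated for products of Gauss sums twisted by a varying character (exactly the sum over $m$ you start with, before expansion), and the abelian-square condition is precisely his nondegeneracy hypothesis. You should cite this result rather than sketch a rederivation; with it in hand, your proof is complete and coincides with the paper's.
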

\begin{proof}
For $t\in(\Z/n\Z)^{2q}$, let $J_n(t)$ be the indicator function that equals one if $t$ is an abelian square and is zero otherwise. From Proposition~\ref{pro:norm_from_L_and_h} we find that
\[
\left(\frac{\norm{g_n}_{2q}}{\sqrt{n}}\right)^{2q}\!\!=\frac{1}{n^{2q}}\!\!\sum_{t\in(\Z/n\Z)^{2q}}\!\!J_n(t)h_{n,0}(t)+\frac{1}{n^{2q}}\!\!\sum_{t\in(\Z/n\Z)^{2q}}\!\!\left(L_{g_n}(t)-J_n(t)\right)h_{n,0}(t).
\]
We show that the second expression on the right hand side tends to zero, which will prove the lemma. Write $s=n+1$, so that $s$ is a power of two. By definition, a Galois polynomial of degree $n-1$ can be written as 
\[
g_n(z)=\sum_{j=0}^{n-1}\psi(\theta^j)z^j,
\]
where $\psi$ is an additive character of $\F_s$ and $\theta$ is a primitive element of $\F_s$. For a multiplicative character $\xi$ of $\F_s$, we define the Gauss sum
\[
G(\xi)=\sum_{x\in\F_s^*}\psi(x)\xi(x).
\]
Letting $\chi$ be the multiplicative character of $\F_s$ given by $\chi(\theta)=e_n(1)$, we see that $g_n(e_n(k))=G(\chi^k)$ for all $k\in\Z/n\Z$.
Therefore
\[
L_{g_n}(t_1,\dots,t_{2q})=\frac{1}{n^{q+1}}\sum_{m\in\Z/n\Z}\prod_{k=1}^qG(\chi^{m+t_k})\overline{G(\chi^{m+t_{q+k}})}.
\]
Since $\abs{G(\xi)}^2$ equals $1$ if $\xi$ is trivial and equals $n+1$ otherwise, we find that $\abs{L_{g_n}(t_1,\dots,t_{2q})-1}=O(n^{-1})$ if $(t_1,\dots,t_{2q})$ is an abelian square. On the other hand, if $(t_1,\dots,t_{2q})$ is not an abelian square, then a result due to Katz~\cite[pp.~161--162]{Kat1988} shows that
\[
\abs{L_{g_n}(t_1,\dots,t_{2q})}\le \frac{q}{n^{q+1}}\,(n+1)^{q+1/2}.
\]
Therefore, by the triangle inequality,
\[
\frac{1}{n^{2q}}\Biggabs{\sum_{t\in(\Z/n\Z)^{2q}}\left(L_{g_n}(t)-J_n(t)\right)h_{n,0}(t)}=O(n^{-2q-1/2})\sum_{t\in(\Z/n\Z)^{2q}}\abs{h_{n,0}(t)},
\]
which tends to zero as $n\to\infty$ by Lemma~\ref{lem:error_term}, as required.
\end{proof}
\par
We proceed similarly as for Fekete polynomials and seek an asymptotic evaluation of the right hand side of the expression in Lemma~\ref{lem:norm_abelian_squares}.
\par
The following lemma is an analogue of Lemma~\ref{lem:even_tuple_in_ex}.
\begin{lemma}
\label{lem:abelian_square_in_ex}
Let $h:\Ac_q(n)\to\C$ be a function that depends only on the first~$q$ entries of its input and let $C(k)$ be the $k$-th signed Carlitz number. Then
\begin{equation}
\sum_{t\in\Ac_q(n)}h(t)=q!\sum_{\pi\in\Pi_q}\;\sums{u\in(\Z/n\Z)^q\\ u\prec \pi}\,h(u|u)\;\prod_{B\in\pi}\frac{C(\abs{B})}{\abs{B}!},   \label{eqn:in_ex_galois}
\end{equation}
where $u|u$ is the $(2q)$-tuple with the first and the second half equal to $u$.
\end{lemma}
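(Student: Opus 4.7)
The plan is to follow the pattern of the proof of Lemma~\ref{lem:even_tuple_in_ex}, with the generating function $\log J_0(2\sqrt{z})$ taking the role of $\log\cosh(z)$. First I would apply Lemma~\ref{lem:exp_formula} with $F(z)=\log J_0(2\sqrt{z})$, so that $G(z)=J_0(2\sqrt{z})=\sum_{k\ge 0}(-z)^k/(k!)^2$. Comparing coefficients via~\eqref{eqn:def_carlitz_numbers} gives $f(k)=(-1)^k C(k)/k!$ and $g(k)=(-1)^k/k!$, and the factors $(-1)^k$ on either side of the identity in Lemma~\ref{lem:exp_formula} cancel to produce the key combinatorial identity
\[
\sum_{\pi\in\Pi_k}\;\prod_{B\in\pi}\frac{C(\abs{B})}{\abs{B}!}=\frac{1}{k!}\qquad\text{for every $k\ge 1$},
\]
the direct analogue of~\eqref{eqn:sum_T}.

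By linearity, it suffices to verify the lemma when $h$ is the indicator of a single first half: fix $u_0\in(\Z/n\Z)^q$ and set $h(t)=1$ when the first $q$ coordinates of $t$ equal $u_0$ and $h(t)=0$ otherwise. This $h$ depends only on the first $q$ entries, and such indicators span the space of admissible $h$'s. Let $\pi_0$ denote the coarsest partition of $\{1,\dots,q\}$ with $u_0\prec\pi_0$, so that the blocks of $\pi_0$ correspond to the equivalence classes of equal entries of $u_0$. On the left-hand side, any abelian square $t\in\Ac_q(n)$ with first half $u_0$ is determined by its second half, which must be a multiset-rearrangement of $u_0$; the number of distinct such rearrangements equals $q!/\prod_{B\in\pi_0}\abs{B}!$.

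For the right-hand side, the condition $h(u|u)=1$ forces $u=u_0$, so $u\prec\pi$ reduces to $u_0\prec\pi$, which holds precisely when $\pi$ is a refinement of $\pi_0$. Such refinements correspond bijectively to independent choices of a partition $\pi_B\in\Pi_B$ for each block $B\in\pi_0$, so the inner sum factors as
\[
\sums{\pi\in\Pi_q\\u_0\prec\pi}\prod_{B\in\pi}\frac{C(\abs{B})}{\abs{B}!}=\prod_{B\in\pi_0}\;\sum_{\pi_B\in\Pi_B}\;\prod_{B'\in\pi_B}\frac{C(\abs{B'})}{\abs{B'}!}=\prod_{B\in\pi_0}\frac{1}{\abs{B}!},
\]
where the last equality applies the key identity to each block. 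Multiplying by the prefactor $q!$ recovers the value on the left-hand side, and the lemma follows.

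I expect no serious obstacle: once the Carlitz identity above is in place, everything reduces to the standard count of multiset-rearrangements and to the factorisation of a sum over refinements of $\pi_0$ as a product over the blocks of $\pi_0$. The one piece of bookkeeping to watch is the identification of blocks of $\pi_0$ with the equivalence classes of equal entries of $u_0$, which underlies both the rearrangement count on the left and the block-product factorisation on the right.
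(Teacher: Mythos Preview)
Your proposal is correct and follows essentially the same route as the paper: both derive the identity $\sum_{\pi\in\Pi_k}\prod_{B\in\pi}C(\abs{B})/\abs{B}!=1/k!$ from Lemma~\ref{lem:exp_formula} with $F(z)=\log J_0(2\sqrt{z})$, then reduce by linearity to the indicator of a fixed first half and match the multiset-rearrangement count on the left with the block-factorised sum over refinements on the right. The only cosmetic difference is that the paper records the block structure of $u_0$ via the multiplicities $m_k(u_0)$ rather than via the coarsest partition $\pi_0$, but $\prod_{B\in\pi_0}\abs{B}!=\prod_k (k!)^{m_k(u_0)}$, so the two presentations are equivalent.
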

\begin{proof}
Take $F(z)=\log J_0(2\sqrt{z})$ in Lemma~\ref{lem:exp_formula}, so that $G(z)$ equals
\[
J_0(2\sqrt{z})=\sum_{k=0}^\infty\frac{(-1)^k}{(k!)^2}z^k.
\]
Use~\eqref{eqn:def_carlitz_numbers} to find from Lemma~\ref{lem:exp_formula} that
\[
\sum_{\pi\in\Pi_k}\;\prod_{B\in\pi}\frac{(-1)^{\abs{B}}C(\abs{B})}{\abs{B}!}=\frac{(-1)^k}{k!}\quad\text{for each $k\ge 1$},
\]
or equivalently
\begin{equation}
\sum_{\pi\in\Pi_k}\;\prod_{B\in\pi}\frac{C(\abs{B})}{\abs{B}!}=\frac{1}{k!}\quad\text{for each $k\ge 1$}.   \label{eqn:sum_C}
\end{equation}
\par
Now let $v\in(\Z/n\Z)^q$ and let $V$ be the set of abelian squares in $(\Z/n\Z)^{2q}$ whose first $q$ entries equal those of $v$. By linearity, it suffices to prove the lemma for the case that $h(x)=1$ for $x\in V$ and $h(x)=0$ otherwise. Then the left hand side of~\eqref{eqn:in_ex_galois} equals
\begin{equation}
\abs{V}=\frac{q!}{\prod_{k=1}^q(k!)^{m_k(v)}}   \label{eqn:num_as_completion}
\end{equation}
(where $m_k(v)$ was defined before the proof of Lemma~\ref{lem:even_tuple_in_ex}). On the other hand, the right hand side of~\eqref{eqn:in_ex_galois} equals
\[
q!\prod_{k=1}^q\left(\sum_{\pi\in\Pi_k}\prod_{B\in\pi}\frac{C(\abs{B})}{\abs{B}!}\right)^{m_k(v)},
\]
which by~\eqref{eqn:sum_C} equals~\eqref{eqn:num_as_completion} again.
\end{proof}
\par
Next we evaluate the inner sums in the right hand side of~\eqref{eqn:in_ex_galois} for $h=h_{n,0}$.
\begin{lemma}
\label{lem:partial_sums_galois}
Let $\pi=\{B_1,\dots,B_\ell\}\in\Pi_q$ be a partition with $\ell$ blocks and write $N_i=\abs{B_i}$. Then  
\[
\lim_{n\to\infty}\frac{1}{n^{2q}}\sums{u\in(\Z/n\Z)^q\\u\prec\pi}h_{n,0}(u|u)=\sums{a_1,\dots,a_\ell\in\Z\\a_1+\cdots+a_\ell=q}\;\prod_{i=1}^\ell\;\frac{1}{(2N_i-1)!}{2N_i-1\bangle a_i-1},
\]
where $u|u$ is the $(2q)$-tuple with the first and the second half equal to $u$.
\end{lemma}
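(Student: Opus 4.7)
The plan is to mirror the structure of the proof of Lemma~\ref{lem:partial_sums_fekete}, with the simplifications coming from the fact that $r=0$ and that $t=u|u$ collapses pairs of coordinates so that only $q$ parameters $u_1,\dots,u_q$ appear. First I would expand
\[
h_{n,0}(u|u)=\sums{0\le j_1,\dots,j_{2q}<n\\ j_1+\cdots+j_q=j_{q+1}+\cdots+j_{2q}}\prod_{k=1}^q e_n\bigl(u_k(j_{q+k}-j_k)\bigr),
\]
and then, exactly as in the Fekete proof, impose $u\prec\pi$ by parameterising $u$ by one free variable $v_i\in\Z/n\Z$ per block $B_i$. Interchanging the sums over $j_1,\dots,j_{2q}$ and $v_1,\dots,v_\ell$, the inner $v_i$-sums each give a factor of $n$ times the indicator that $\sum_{k\in B_i}(j_{q+k}-j_k)\equiv 0\pmod n$. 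Writing each such indicator as $\sum_{a_i\in\Z} I\bigl[\sum_{k\in B_i}(j_{q+k}-j_k)=a_i n\bigr]$, I would then obtain
\[
\sums{u\in(\Z/n\Z)^q\\ u\prec\pi}h_{n,0}(u|u)=n^\ell\sum_{a_1,\dots,a_\ell\in\Z}\sum_{0\le j_1,\dots,j_{2q}<n}\prod_{i=1}^\ell I\!\left[\sum_{k\in B_i}(j_{q+k}-j_k)=a_i n\right],
\]
where summing the inner constraints over $i$ shows that $j_1+\cdots+j_q=j_{q+1}+\cdots+j_{2q}$ is automatic exactly when $a_1+\cdots+a_\ell=0$ (so I can drop that side constraint from the original definition of $h$).

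Next I would exploit the fact that the $i$-th indicator involves only $2N_i$ of the summation variables, which factorises the inner sum as $\prod_{i=1}^\ell S_i(a_i)$ where
\[
S_i(a_i)=\sum_{0\le j_1,\dots,j_{2N_i}<n} I\!\left[\,\sum_{k=1}^{N_i}j_{N_i+k}-\sum_{k=1}^{N_i}j_k=a_i n\,\right].
\]
Applying the substitution $j_k\mapsto n-1-j_k$ for $k\le N_i$ converts this to a counting problem for the number of nonnegative solutions to $\sum_{k=1}^{2N_i}j_k=N_i(n-1)+a_i n$ with each $j_k<n$, to which Lemma~\ref{lem:asymptotic_counting_lemma} applies with $m_n/n\to N_i+a_i$; this yields $S_i(a_i)=n^{2N_i-1}(2N_i-1)!^{-1}\smash{{2N_i-1\bangle N_i+a_i-1}}(1+o(1))$.

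Assembling the pieces and using $\sum_i(2N_i-1)=2q-\ell$, the total prefactor is $n^{\ell}\cdot n^{2q-\ell}=n^{2q}$, so dividing by $n^{2q}$ and passing to the limit gives
\[
\lim_{n\to\infty}\frac{1}{n^{2q}}\sums{u\in(\Z/n\Z)^q\\u\prec\pi}h_{n,0}(u|u)=\sums{a_1,\dots,a_\ell\in\Z\\a_1+\cdots+a_\ell=0}\;\prod_{i=1}^\ell\frac{1}{(2N_i-1)!}{2N_i-1\bangle N_i+a_i-1},
\]
where the outer sum is locally finite because each generalised Eulerian factor vanishes outside $N_i+a_i-1\in(-1,2N_i-1)$. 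Finally, the change of variables $a_i\mapsto a_i-N_i$ matches $a_1+\cdots+a_\ell=0$ with $a_1+\cdots+a_\ell=q$ (using $\sum_i N_i=q$) and delivers the statement of the lemma.

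The only slightly delicate step is the bookkeeping around the substitution $j_k\mapsto n-1-j_k$ and the subsequent re-indexing, which is where the shift from $\sum a_i=0$ to $\sum a_i=q$ appears; everything else is a direct character-orthogonality computation combined with the asymptotic counting lemma, so I do not expect a substantive obstacle.
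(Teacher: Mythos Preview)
Your proposal is correct and follows essentially the same route as the paper's own proof: expand $h_{n,0}(u|u)$, use orthogonality on each block variable to produce the indicators $\sum_{k\in B_i}(j_{q+k}-j_k)=a_in$, note that summing these forces $\sum_i a_i=0$, factor over blocks, apply the substitution $j_k\mapsto n-1-j_k$ and Lemma~\ref{lem:asymptotic_counting_lemma}, and finally re-index $a_i\mapsto a_i-N_i$. The paper even explicitly frames its argument as a streamlined version of the proof of Lemma~\ref{lem:partial_sums_fekete}, which is exactly what you did.
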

\begin{proof}
The proof is similar to that of Lemma~\ref{lem:partial_sums_fekete}, and so is presented in slightly less detail. Put
\[
H_n=\sums{u\in(\Z/n\Z)^q\\u\prec\pi}h_{n,0}(u|u),
\]
which we can rewrite as
\[
H_n=\sums{0\le j_1,\dots,j_{2q}<n\\j_1+\dots+j_q=j_{q+1}+\dots+j_{2q}}\prod_{i=1}^\ell\;\sum_{u\in \Z/n\Z}\;e_n\left(u\sum_{k\in B_i}(j_{q+k}-j_k)\right).
\]
The product is either zero or equals $n^\ell$ and is nonzero exactly when there exist $a_1,\dots,a_\ell\in\Z$ such that
\begin{equation}
\sum_{k\in B_i}(j_{q+k}-j_k)=a_in   \label{eqn:sum_B_i_condition_2}
\end{equation}
for all $i\in\{1,\dots,\ell\}$. Hence
\[
H_n=n^\ell\sums{0\le j_1,\dots,j_{2q}<n\\j_1+\dots+j_q=j_{q+1}+\dots+j_{2q}}\sum_{a_1,\dots,a_\ell\in\Z}\;\prod_{i=1}^\ell\;I\Bigg[\sum_{k\in B_i}(j_{q+k}-j_k)=a_in\Bigg].
\]
Summing both sides of~\eqref{eqn:sum_B_i_condition_2} over $i\in\{1,\dots,\ell\}$ gives
\[
\sum_{k=1}^q(j_{q+k}-j_k)=n\sum_{i=1}^\ell a_i,
\]
so that
\[
H_n=n^\ell\sums{a_1,\dots,a_\ell\in\Z\\a_1+\cdots+a_\ell=0}\sum_{0\le j_1,\dots,j_{2q}<n}\;\prod_{i=1}^\ell\;I\Bigg[\sum_{k\in B_i}(j_{q+k}-j_k)=a_in\Bigg]
\]
or equivalently
\[
H_n=n^\ell\sums{a_1,\dots,a_\ell\in\Z\\a_1+\cdots+a_\ell=0}\sum_{0\le j_1,\dots,j_{2q}<n}\;\prod_{i=1}^\ell\;I\Bigg[\sum_{k\in B_i}(j_{q+k}+j_k)=a_in+N_i(n-1)\Bigg].
\]
We can factor the inner sum as follows
\[
\prod_{i=1}^\ell\;\sum_{0\le j_1,\dots,j_{2N_i}<n}\;I\Bigg[\sum_{k=1}^{2N_i}j_k=a_in+N_i(n-1)\Bigg].
\]
Since $\sum_{i=1}^\ell(2N_i-1)=2q-\ell$, we find from Lemma~\ref{lem:asymptotic_counting_lemma} that
\[
\lim_{n\to\infty}\frac{H_n}{n^{2q}}=\sums{a_1,\dots,a_\ell\in\Z\\a_1+\cdots+a_\ell=0}\;\prod_{i=1}^\ell\;\frac{1}{(2N_i-1)!}{2N_i-1\bangle N_i+a_i-1},
\]
since the outer sum is locally finite. The lemma follows after re-indexing the summation.
\end{proof}
\par
Theorem~\ref{thm:norm_galois} now follows from Lemmas~\ref{lem:norm_abelian_squares},~\ref{lem:abelian_square_in_ex}, and~\ref{lem:partial_sums_galois}, upon noting that $h_{n,0}$ has the required property in Lemma~\ref{lem:abelian_square_in_ex}.
\par
Next we deduce Corollary~\ref{cor:norm_galois} from Theorem~\ref{thm:norm_galois}. This is again broadly similar to the proof of Corollary~\ref{cor:norm_fekete}. Recall the definition of the Eulerian polynomials $A_N(x)$ from~\eqref{eqn:def_Eulerian_polynomials} and define polynomials $G_k(x)$ by $G_0(x)=1$, and
\begin{equation}
\frac{G_k(x)}{k!}=\sums{\pi\in\Pi_k}\,\prod_{i=1}^\ell \frac{C(N_i)\,A_{N_i}(x)}{(2N_i-1)!N_i!}\quad\text{for $k\ge 1$},   \label{eqn:def_G_n}
\end{equation}
where $\pi=\{B_1,\dots,B_\ell\}$ and $N_i=\abs{B_i}$. Then $G_k(x)$ is a polynomial of degree $2k-1$ with $G_k(0)=0$ for $k\ge 1$, so we can write
\[
G_k(x)=\sum_{m=1}^{2k-1}G(k,m)\,x^m\quad\text{for $k\ge 1$}.
\]
It is readily verified that Theorem~\ref{thm:norm_galois} is equivalent to
\[
\lim_{n\to\infty}\left(\frac{\norm{g_n}_{2q}}{\sqrt{n}}\right)^{2q}=G(q,q).
\]
It remains to show that the numbers $G(k,m)$ are the same as those given in Corollary~\ref{cor:norm_galois}. Use $G_0(x)=1$ and apply Lemma~\ref{lem:exp_formula} to~\eqref{eqn:def_G_n} to find that
\[
\frac{G_k(x)}{k!}=\sum_{j=1}^k{k-1\choose j-1}\frac{C(j)A_j(x)}{(2j-1)!\,j!}\,\frac{G_{k-j}(x)}{(k-j)!}\quad\text{for $k\ge 1$},
\]
or equivalently
\[
G_k(x)=\sum_{j=1}^k{k\choose j}{k-1\choose j-1}\frac{C(j)A_j(x)}{(2j-1)!}\,G_{k-j}(x)\quad\text{for $k\ge 1$}.
\]
With $G(0,0)=1$ (which equals $G_0(x)$), this is equivalent to the recursive definition of the numbers $G(k,m)$ given in Corollary~\ref{cor:norm_galois}.

%%%%%%%%%%%%%%%%%%%%%%%%%%%%%%%%%%%%%%%%%%%%%%%%%%%%%%%%%%%%%%%%%%%%%%%%
% 
% \bibliographystyle{abbrv}
% \bibliography{references}

\enlargethispage{3.5ex}

\end{document}